\newcommand{\lnorm}{\left|\left|}
\newcommand{\rnorm}{\right|\right|}
\newtheorem{thm}{Theorem}[section]
\newtheorem{lem}[thm]{Lemma}
\newtheorem{rem}[thm]{Remark}
\title[Global existence for the Boltzmann equation]{Global existence for the Boltzmann equation in $L^r_v L^\infty_t L^\infty_x$ spaces}
\author{Koya Nishimura}
\subjclass[2010]{35Q20, 35A01.}
\begin{document}
\maketitle
\begin{abstract}
We study the Boltzmann equation near a global Maxwellian. We prove the global existence of a unique mild solution with initial data which belong to the $L^r_v L^\infty_x $ spaces where $r \in (1,\infty]$ by using the excess conservation laws and entropy inequality introduced in \cite{r1}.
\end{abstract}
\section{The Boltzmann Equation}
Recall that the Boltzmann equation is given by
\begin{equation}
\label{ws}
\partial_t F+v \cdot \nabla_x F= Q(F,F),\quad F(0,x,v)=F_0 (x,v),
\end{equation}
where $F(t,x,v)$ is the distribution function for the particles at time $t \geq 0$, position $x \in \Omega=\mathbb{R}^3 \text{ or } \mathbb{T}^3$, and velocity $v \in \mathbb{R}^3.$ The collision operator is defined by
\begin{equation}
Q(F,G)(v)=\int_{\mathbb{R}^3 \times \mathbb{S}^2} du\, d\omega\,|v-u|^\gamma b(\theta) \left[F(v')G(u')-F(v)G(u)\right].\nonumber
\end{equation}
Here the angle $\theta$ is defined by $\cos \theta=[v-u]\cdot \omega / |v-u|$ and $B(\theta)$ satisfies the angular cutoff assumption $0 \leq b(\theta)\leq C \left|\cos \theta \right|$. We assume hard potentials $0 \leq \gamma \leq 1$. The post-collisional velocities satisfy
\begin{equation}
\label{cons}
\begin{gathered}
v'=v+[(u-v)\cdot  \omega]\omega,\quad u'=u-[(u-v)\cdot \omega]\omega,\\
v'+u'=v+u,\quad \left|v'\right|^2+\left|u'\right|^2=|v|^2+|u|^2.
\end{gathered}
\end{equation}

Denoting a normalized global Maxwellian by $\mu(v)=e^{-|v|^2}$, $\mu$ satisfies (\ref{ws}) by (\ref{cons}), and so we define the perturbation $f(t,x,v)$ to $\mu$ as $$F=\mu+\sqrt{\mu} f.$$
We consider the Boltzmann equation for the perturbation $f$:
\begin{equation}
\label{lrb}
\left[\partial_t +v \cdot \nabla_x+\nu(v)-K \right]f=\Gamma(f,f),\quad f(0,x,v)= f_0 (x,v).
\end{equation}
Above $\nu (v)=\Gamma_{loss} (1,\sqrt{\mu}) \approx (1+|v|)^\gamma$ is a multiplication operator defined by (\ref{dcc}) below, and $K$ is a integral operator. (the kernel satisfies (\ref{jk1}) below. Also, see \cite{w21} for its form.) Since $Q(\mu,\mu)=0$, the remaining nonlinear part $\Gamma(\cdot,\cdot)$ is defined as $$\Gamma(g,h)={1 \over \sqrt{\mu}}Q(\sqrt{\mu} g, \sqrt{\mu} h).$$
Lastly, the mild form of (\ref{lrb}) is given by
\begin{equation}
\label{integraleq}
\begin{gathered}
\hspace{-50mm}f(t,x,v)=e^{-\nu(v)t}f_0 \left(x-v t,v\right)\\ \hspace{10mm}+\int_0^t e^{-\nu(v)(t-s)} \,\left[K \left(f\right) +\Gamma \left(f,f\right) \right]\left(s,x-v(t-s),v\right)\,ds,
\end{gathered}
\end{equation}
and its equivalent form is given by (\ref{sxa}) below.
\section{Main Results}
\noindent{\bf{Notation.}} In this paper, we use the notation $L^\infty_x =L^\infty (\Omega)$ and $L^r_v=L^r (\Bbb{R}^3_v)\,(r \in (1,\infty])$. We also write the $L^\infty$ norm on the time interval $[0,t]$ as $\lnorm \,\cdot \, \rnorm_{L^\infty_t}$. For a function $g : [0,\infty) \times \Bbb{R}^3_x \times \Bbb{R}^3_v \rightarrow \Bbb{R}$, we define the mixed norms $$\lnorm g\rnorm_{L^\infty_t L^r_v L^\infty_x}=\sup_{s \in [0,t]} \left[\int_{\Bbb{R}^3} du\, \left\{\sup_{y \in \Bbb{R}^3} \left| g(s,y,u) \right| \right\}^r \right]^{1 \over r},$$
and 
$$\lnorm g\rnorm_{L^r_v L^\infty_{t,x}}= \lnorm g\rnorm_{L^r_v L^\infty_t L^\infty_{x}} =\left[ \int_{\Bbb{R}^3} du \,\left\{\sup_{s \in [0,t],\,y \in \Bbb{R}^3} \left|g(s,y,u) \right| \right\}^r \right]^{1 \over r}.$$
Similarly for the norm $|| \cdot ||_{L^r_v L^\infty_x}$. For $r$, we denote the conjugate exponent to $r$ by $r'$. We define a weight function $w (v)=1+|v|$.

For a solution to the Boltzmann equation (\ref{ws}), we have formally the excess conservations of mass and energy and the excess entropy inequality:
\begin{equation}
\label{abba}
 \begin{gathered}
\iint_{\Bbb{R}^3 \times \Bbb{R}^3} F(t,x,v)-\mu(v) \,dv\,dv=\iint F_0-\mu \equiv M_0,\\
\iint_{\Bbb{R}^3 \times \Bbb{R}^3} \left|v \right|^2 \left[F(t,x,v)-\mu(v)\right] \,dv\,dv=\iint \left|v \right|^2 \left[F_0-\mu \right] \equiv E_0,\\
\hspace{-30mm}\iint_{\Bbb{R}^3 \times \Bbb{R}^3} F(t,x,v) \ln F(t,x,v) -\mu(v) \ln \mu(v)\,dv\,dx\\ \hspace{30mm}\leq \iint_{\Bbb{R}^3 \times \Bbb{R}^3} F_0 \ln F_0 -\mu \ln \mu \,dv\,dx \equiv H_0.
 \end{gathered}
\end{equation}
The following local and global existence results are valid.
\begin{thm}
\label{t2}
Let $r \in (1,\infty]$ and $l>\max\{3/r',1/r' + (\gamma+1)/2,2\gamma\}$, and $F_0=\mu+\sqrt{\mu}f_0 \geq 0$. For any $0<M<\infty$, there exist $T^\star (M)>0$ and $\epsilon>0$ such that if $\lnorm w^l f_0 \rnorm_{L^r_v L^\infty_{x}} \leq M/2,$ and 
\begin{equation}
\label{aaff}
\sup_{0 \leq t \leq T^\star,\,x \in \mathbb{R}^3}  \int_{\mathbb{R}^3} dv\,e^{-{|v|^2 \over 4}} \left| f_0 (x-vt,v)\right| \leq \epsilon,
\end{equation}
then there is a unique local solution (\ref{integraleq}), $f(t,x,v)$, to (\ref{lrb}) in $[0,T^\star] \times \Omega \times \Bbb{R}^3$ satisfying
$$\lnorm w^l f \rnorm_{L^r_v L^\infty_{T^\star} L^\infty_x} \leq M,$$
and $F=\mu+\sqrt{\mu} f \geq 0.$ Moreover, if $M_0$, $E_0$, and $H_0$ are finite, then (\ref{abba}) holds.
\end{thm}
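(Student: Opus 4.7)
I would build the solution by Picard iteration on the mild form (\ref{integraleq}). Define
\[
\mathcal{T}[f](t,x,v) := e^{-\nu(v)t} f_0(x-vt,v) + \int_0^t e^{-\nu(v)(t-s)} \bigl[Kf + \Gamma(f,f)\bigr](s,x-v(t-s),v)\,ds,
\]
set $f^0 \equiv 0$, $f^{n+1} := \mathcal{T}[f^n]$, and prove that $\mathcal{T}$ maps the ball $\mathcal{B}_M := \{ f : \lnorm w^l f \rnorm_{L^r_v L^\infty_{T^\star} L^\infty_x} \leq M \}$ into itself and is a contraction, provided $T^\star = T^\star(M)$ and $\epsilon$ in (\ref{aaff}) are chosen small enough.

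\textbf{Linear and nonlinear estimates.} The data term contributes at most $M/2$ by hypothesis. For $Kf$, I would use the pointwise kernel bound (\ref{jk1}) to write $w^l Kf = \int k_w(v,u)\,(w^l f)(\cdot,u)\,du$; under the condition $l > \max\{3/r',\, 1/r'+(\gamma+1)/2\}$ the quantity $\int |k_w(v,u)|^{r'}\,du$ is uniformly bounded in $v$, so Hölder and Minkowski in $L^r_v$ together with $\int_0^t e^{-\nu(v)(t-s)}\,ds \leq \nu(v)^{-1}$ give a bound $\lesssim \lnorm w^l f\rnorm_{L^r_v L^\infty_{t,x}}$. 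For $\Gamma(f,f)$, after splitting the gain term and controlling the angular kernel, one factor of $f$ is estimated pointwise in $x$ and paired with a Maxwellian weight $e^{-|v|^2/4}$, producing the scalar
$$
A(t) := \sup_{s \leq t,\,y \in \mathbb{R}^3} \int_{\mathbb{R}^3} dv\, e^{-|v|^2/4}\, |f(s,y,v)|,
$$
while the other factor keeps the full $\lnorm w^l f\rnorm_{L^r_v L^\infty_{t,x}}$ norm. A bootstrap through the mild form relates $A(t)$ to the free-streaming integral in (\ref{aaff}) and yields $A(t) \leq 2\epsilon$ on the iteration. Collecting the pieces gives $\lnorm w^l \mathcal{T}[f]\rnorm \leq M/2 + C_l\bigl(\epsilon + T^\star M\bigr)\lnorm w^l f\rnorm$, so that $\mathcal{T}$ is a self-map of $\mathcal{B}_M$ for $\epsilon$ and $T^\star(M)$ small; the identical estimate on $\mathcal{T}[f]-\mathcal{T}[g]$ supplies the contraction, hence existence and uniqueness.

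\textbf{Positivity and conservation.} Positivity $F = \mu + \sqrt{\mu}f \geq 0$ is obtained by running a parallel iteration for $F$ itself using the gain/loss decomposition of $Q$, in the form $\partial_t F^{n+1} + v\cdot\nabla_x F^{n+1} + F^{n+1}R[F^n] = Q_+(F^n,F^n)$ with $R[F] = \int b(\theta)|v-u|^\gamma F(u)\,du\,d\omega$; Duhamel in $t$ preserves non-negativity since $F_0 \geq 0$, and uniqueness identifies the limit with $\mu + \sqrt{\mu}f$. Given the finiteness of $M_0, E_0, H_0$, the excess laws (\ref{abba}) follow by testing (\ref{ws}) against $1$, $|v|^2$, and $\ln F$, invoking the collisional symmetries (\ref{cons}) and the $H$-theorem; the bound $\lnorm w^l f\rnorm \leq M$ and the $\sqrt{\mu}$ weight provide the integrability needed to justify each computation on $[0,T^\star]$.

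\textbf{Main obstacle.} The delicate point is closing the nonlinear estimate in $L^r_v L^\infty_{t,x}$: pure $L^\infty_{v,x}$ methods do not extend to $r < \infty$, whereas bare $L^r_v$ control offers no pointwise handle on the gain part $\Gamma_+(f,f)$. The resolution is the smallness hypothesis (\ref{aaff}) on the free-streamed Maxwellian-weighted profile of $|f_0|$, which reduces one factor in $\Gamma(f,f)$ to size $O(\epsilon)$ and thereby closes the iteration in this mixed norm without requiring smallness of $M$ itself.
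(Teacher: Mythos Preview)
Your proposal has a genuine gap in the treatment of $\Gamma_{gain}$. You claim that ``one factor of $f$ is \ldots\ paired with a Maxwellian weight $e^{-|v|^2/4}$, producing the scalar $A(t)$''. This works for $\Gamma_{loss}(f,f)(v)=f(v)\int |v-u|^\gamma b(\theta)\sqrt{\mu(u)}f(u)\,du\,d\omega$, where $f(u)$ sits next to $\sqrt{\mu(u)}$. It does \emph{not} work for the gain term: there $\Gamma_{gain}(f,f)(v)=\int |v-u|^\gamma b(\theta)\,e^{-|u|^2/2}f(v')f(u')\,du\,d\omega$, and the Maxwellian weight is on the \emph{pre}-collisional variable $u$, while $f$ is evaluated at the post-collisional $v',u'$. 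Neither factor of $f$ inherits a Gaussian weight, so no bound of the form $\lnorm w^l\Gamma_{gain}(f,f)\rnorm_{L^r_v}\lesssim A(t)\,\lnorm w^l f\rnorm_{L^r_v}$ is available. Consequently the closing estimate $\lnorm w^l\mathcal{T}[f]\rnorm\le M/2+C_l(\epsilon+T^\star M)\lnorm w^l f\rnorm$ is unjustified. A secondary issue: your $K$-estimate via $\int|k_w(v,u)|^{r'}du<\infty$ fails for $r\le 3/2$ because of the $|v-u|^{-1}$ singularity in (\ref{jk1}); the paper instead uses the $L^1$ bound (\ref{jk}) and a Schur-type argument.

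The paper's route is structurally different. It iterates on the form (\ref{sxa})/(\ref{kjb}), where the full loss operator (not just $\nu$) is absorbed into the exponential $e^{-\int_s^t g_{f^n}}$; this is also what makes positivity automatic. The gain term is then controlled by the bilinear $L^r_v$ estimates of Lemma~\ref{lem1}: for $r\ge 4/(3-\gamma)$ one has $\lnorm w^l\Gamma_{gain}(g,h)\rnorm_{L^r_v}\le C\lnorm w^l g\rnorm_{L^r_v}\lnorm w^l h\rnorm_{L^r_v}$ and smallness comes \emph{entirely} from $T^\star$ --- the hypothesis (\ref{aaff}) is not used at all in this range (see the Remark after Theorem~\ref{t1}). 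For $r<4/(3-\gamma)$ the estimate (\ref{bbg}) loses $w^{(\gamma+1)/2-2/r'}$; this loss is recovered via $\int_0^t e^{-\nu(v)(t-s)/2}\nu^\delta(v)\,ds\le\eta+C_\eta t$, and here (\ref{aaff}) enters only through Lemma~\ref{q11e} to guarantee $e^{-\int_s^t g_{f^n}}\le e^{-\nu(v)(t-s)/2}$. In short, (\ref{aaff}) controls the exponential factor in the modified Duhamel, not the size of $\Gamma$; your identification of its role is the wrong one.
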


\begin{thm}
\label{t1}
In addition to the assumptions as Theorem \ref{t2}, let $l>3/r'+\gamma$. For any $0<M<\infty$, there exist $\epsilon>0$ and $C_0 (r,l)>0$ such that if $|| w^l f_0 ||_{L^r_v L^\infty_{x}} \leq M$, (\ref{aaff}) and 
\begin{equation}
\label{jji}
\sup_{t\geq T^\star,\,x \in \mathbb{R}^3} \int_{\mathbb{R}^3} e^{-\nu(v)t} \left| f_0 (x-vt,v)\right|\,dv+\left|M_0 \right|+\left|E_0 \right|+\left|H_0 \right| \leq \epsilon,
\end{equation}
then there is a unique global solution (\ref{integraleq}), $f(t,x,v)$, to (\ref{lrb}) satisfying
$$\lnorm w^l f \rnorm_{L^r_v L^\infty_{t,x}} \leq C_0 \left(M+M^2\right) \quad \forall t>0,$$
and moreover $F=\mu+\sqrt{\mu}f \geq 0.$
\end{thm}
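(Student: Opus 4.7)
The plan is to combine the local theory of Theorem \ref{t2} with a continuation/bootstrap argument driven by the mild form \eqref{integraleq}, where the smallness hypothesis \eqref{jji} is used to damp the free-streaming contribution past time $T^\star$ and the excess conservation laws plus entropy inequality \eqref{abba} are used to close the estimate on the linear term $K f$.

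First I would apply Theorem \ref{t2} to produce a unique mild solution on $[0,T^\star]$ with $\|w^l f\|_{L^r_v L^\infty_{T^\star} L^\infty_x}\leq M$. To extend past $T^\star$, I set
\[
T_\star \;=\; \sup\Bigl\{\, T\geq T^\star : \lnorm w^l f\rnorm_{L^r_v L^\infty_T L^\infty_x}\leq C_0(M+M^2)\,\Bigr\},
\]
and argue by contradiction, assuming $T_\star<\infty$, aiming to show that the bound is actually strictly improved at $T_\star$. For any $t\in[T^\star,T_\star]$, I evaluate \eqref{integraleq} and take $w^l(v)$ times $L^\infty_x$-sup and then $L^r_v$. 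The initial term $w^l(v)e^{-\nu(v)t}f_0(x-vt,v)$ is handled by splitting $t\in[0,T^\star]$ versus $t\geq T^\star$: on the former it is controlled by the assumption $\|w^l f_0\|_{L^r_v L^\infty_x}\leq M$ and the boundedness of $e^{-\nu(v)t}w^l(v)$ for $t\leq T^\star$; on the latter the integrated pointwise hypothesis \eqref{jji} furnishes an $\epsilon$-bound. The nonlinear Duhamel piece is treated by the standard bilinear estimate $\|w^l\Gamma(f,g)\|\lesssim \nu(v)\,\|w^l f\|\,\|w^l g\|$ (which uses $l>3/r'+\gamma$), so that
\[
\Bigl\|\int_0^t e^{-\nu(v)(t-s)}w^l\Gamma(f,f)(s)\,ds\Bigr\|_{L^r_v L^\infty_x}\;\lesssim\;\lnorm w^l f\rnorm_{L^r_v L^\infty_t L^\infty_x}^{2}\;\leq\; C_0^2(M+M^2)^2,
\]
which, thanks to the $\epsilon$-smallness, is absorbed into $(1/2)C_0(M+M^2)$.

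The delicate step is the linear piece $\int_0^t e^{-\nu(v)(t-s)}(Kf)(s,x-v(t-s),v)\,ds$, since $K$ by itself yields no time decay. Here I would invoke the equivalent form \eqref{sxa} referred to in the paper, decomposing $f=\mathbf{P}f+(\mathbf{I}-\mathbf{P})f$ into its hydrodynamic projection (the $1,v,|v|^2$ modes) and its microscopic remainder. The microscopic part is absorbed by the usual gain from $K(\mathbf{I}-\mathbf{P})$ against $\nu(v)$ together with the bootstrap bound. For $\mathbf{P}f$, whose coefficients are precisely the macroscopic quantities $(\rho,u,\theta)[f]$, the excess conservation laws of mass and energy together with the entropy inequality in \eqref{abba} control these moments by the sum $|M_0|+|E_0|+|H_0|\leq\epsilon$ (via an entropy–dissipation/Csisz\'ar–Kullback type bound, which is the standard way these three data are turned into $L^1_{x,v}$ smallness of the macroscopic part). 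Combining, one obtains on $[T^\star,T_\star]$:
\[
\lnorm w^l f\rnorm_{L^r_v L^\infty_t L^\infty_x}\;\leq\; C(M+\epsilon)+C\epsilon\, C_0(M+M^2)+C\, C_0^2(M+M^2)^2,
\]
and choosing $C_0$ large and then $\epsilon$ small yields a strict improvement to $<C_0(M+M^2)$, contradicting the definition of $T_\star$ and hence giving $T_\star=\infty$. Positivity $F=\mu+\sqrt{\mu}f\geq 0$ and uniqueness propagate from Theorem \ref{t2}.

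I expect the main obstacle to be the linear $K$ term, specifically transferring the $L^1_{x,v}$-type control of $\mathbf{P}f$ coming from \eqref{abba} into the $L^r_v L^\infty_{t,x}$ framework used here. This is the whole point of using the excess conservation laws plus entropy inequality from \cite{r1}: without them the zero-eigenspace of $L=\nu-K$ prevents any decay, and only the trilogy $(M_0,E_0,H_0)\to 0$ supplies the needed smallness for $\mathbf{P}f$ uniformly in time. The nonlinear and initial-data parts are, by comparison, routine given the weighted bilinear estimate used in the local theorem.
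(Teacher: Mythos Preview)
There is a genuine gap in your closure of the bootstrap, and it occurs precisely at the nonlinear term, not the linear one you flag as ``delicate''. Your bilinear estimate gives
\[
\Bigl\|\int_0^t e^{-\nu(v)(t-s)}w^l\Gamma(f,f)\,ds\Bigr\|_{L^r_v L^\infty_{t,x}}\lesssim \lnorm w^l f\rnorm_{L^r_v L^\infty_{t,x}}^{\,2}\le C_0^2(M+M^2)^2,
\]
and you assert that ``thanks to the $\epsilon$-smallness'' this is absorbed into $\tfrac12 C_0(M+M^2)$. But there is no $\epsilon$ in that bound: absorbing it would force $C_0(M+M^2)\lesssim 1$, i.e.\ small $M$, contradicting the hypothesis that $M$ is arbitrary. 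The whole point of the paper's Lemma~\ref{ineq1} is to replace one factor in the bilinear estimate by a power of the \emph{unweighted} $\lnorm f\rnorm_{L^1_v}$, which is then shown to be small (Lemma~\ref{lem22}) via the excess conservation laws. Without routing smallness through the $L^1_v$ norm of $f$, the nonlinear term cannot be closed for large $M$.

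Your treatment of $Kf$ is also not correct in this framework. The macroscopic--microscopic splitting $\mathbf{P}f+(\mathbf{I}-\mathbf{P})f$ buys coercivity in $L^2$-based energy estimates, but here there is no such mechanism; and $(M_0,E_0,H_0)$ only control $\int_x$-integrated quantities, not the pointwise $L^\infty_x$ norm of the macroscopic coefficients. The paper instead iterates \eqref{integraleq} once (producing the double time integral $H_3$ in the proof of Lemma~\ref{lem23}) and then performs the change of variables $u_1\mapsto y_2=y_1-u_1(s_1-s_2)$, which converts the velocity integral into a spatial integral and allows the key $L^1_{x,v}$ bound \eqref{bbf} to enter. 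This is how the $L^1$-type information from the conservation laws is transferred to the $L^r_v L^\infty_{t,x}$ norm; your $\mathbf{P}/(\mathbf{I}-\mathbf{P})$ route does not accomplish this.
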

\begin{rem}
\begin{itemize}
\item[1.] In Theorem \ref{t2}, when $r \in [4/(3-\gamma),\infty]$ we need not assume
\begin{equation}
\label{qll}
\sup_{0 \leq t \leq T^\star,\, x \in \Bbb{R}^3} \int dv\,e^{-{|v|^2 \over 4}}\left|f_0 (x-vt,v)\right| \ll 1,
\end{equation}
and the $L^r_v L^\infty_{t,x}$ norm can be replaced by the $L^\infty_t L^r_v L^\infty_x$ norm in both theorems. (\ref{qll}) is required only when $r \in (1,4/(3-\gamma))$ in Theorem \ref{t2}. In the case, we use Lemma \ref{q11e} to get a decay of the collision term of (\ref{kjb}) as $|v| \rightarrow \infty$. (Note that we consider the hard potential case.)
\item[2.] Recently, the case $r=\infty$ was proved in \cite{r2g}. In our results, we can take large initial data in $L^r_v L^\infty_{x}\,(r>1)$ and we need not take the uniform norm with respect to velocity variable $v$, but the $L^\infty_t$ norm is taken before the $L^r_v$ norm.
\end{itemize}
\end{rem}

In \cite{r1}, the $L^\infty$ estimate using the excess conservation laws and entropy inequality (\ref{abba}) is established. And in \cite{r2g}, to obtain global existence, it was shown that by a similar argument one can make the $L^1_v$ norm of a solution (\ref{integraleq}) small as in Lemma \ref{lem22}. 
Also for the $L^\infty$ estimate of the collision term, the $L^1_v$ norm is involved as in Lemma \ref{ineq1} below. The case $r<\infty$ is technically more complicated to handle than the case $r=\infty$. For instance, in Lemma \ref{ineq1}, we will need to split the integral domain of the gain term into four parts, and we change variables several times. The purpose of this paper is to extend the global existence results of \cite{r2g}. For  historical results of the Boltzmann equation, see the article and the references therein.

This article is organized as follows. In Section 3 we prove local existence (Theorem \ref{t2}). And in Section 4, we establish a $L^\infty_{t,x} L^r_v$ estimate (Lemma \ref{lem22}) and a $L^r_v L^\infty_{t,x}$ estimate (Lemma \ref{lem23}). Global existence (Theorem \ref{t1}) is follows easily from them.
\section{Local Solutions}
As usual, we split  $\Gamma(g,h)=\Gamma_{gain} (g,h)-\Gamma_{loss}(g,h)$ as
\begin{equation}
\begin{split}
\Gamma_{gain} (g,h) (v)\hspace{-0mm}={1 \over \sqrt{\mu (v)}} \int_{\Bbb{R}^3 \times \Bbb{S}^2} du\,d\omega\,|v-u|^\gamma b(\theta) \cdot \left(\sqrt{\mu}g\right)(v') \cdot \left(\sqrt{\mu}h\right)(u'),
\end{split}
\end{equation}
\begin{equation}
\label{dcc}
\Gamma_{loss} (g,h) (v)=\int_{\Bbb{R}^3 \times \Bbb{S}^2} du\,d\omega\,|v-u|^\gamma b(\theta) \sqrt{\mu (u)}g(v)h(u).
\end{equation}
To begin with, we give a estimate for $\Gamma_{gain}$. 
\begin{lem} 
\label{lem1}
Let $r \in[4/(3-\gamma),\infty]$ and $l >3/r'$. For $g(v),h(v) \geq 0$, we have
\begin{equation}
\label{lgain}
\lnorm w^l \Gamma_{gain} (g,h) \rnorm_{L^r_v} \leq C_{r,l} \lnorm w^l g\rnorm_{L^r_v} \lnorm w^l h \rnorm_{L^r_v}.
\end{equation}
\end{lem}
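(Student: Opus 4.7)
The plan is to move the weight $w^l(v)$ onto the post-collisional velocities via the conservation laws, then close the $L^r$ estimate by Hölder and the standard change of variables. From the energy conservation $|v|^2+|u|^2=|v'|^2+|u'|^2$ one has $|v|\leq|v'|+|u'|$, hence $w^l(v)\leq C_l(w^l(v')+w^l(u'))$. Substituting this into $w^l(v)\Gamma_{gain}(g,h)(v)$, the integrand splits into two symmetric pieces, and it suffices to bound
\[
I(v) := \int_{\mathbb{R}^3\times\mathbb{S}^2} du\,d\omega\,|v-u|^\gamma b(\theta)\sqrt{\mu(u)}\,(w^l g)(v')\,h(u').
\]

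Setting $K(v,u,\omega):=|v-u|^\gamma b(\theta)\sqrt{\mu(u)}$, H\"older's inequality in the $(u,\omega)$ variables with exponents $r'$ and $r$ yields
\[
I(v)\leq\Bigl(\int K\,du\,d\omega\Bigr)^{1/r'}\Bigl(\int K\,(w^l g)^r(v')\,h^r(u')\,du\,d\omega\Bigr)^{1/r},
\]
and the Maxwellian localises the first factor to give $\int K\,du\,d\omega\leq Cw^\gamma(v)$. After raising to the $r$-th power, integrating in $v$, and applying Fubini, I would perform at fixed $\omega$ the change of variables $(v,u)\mapsto(v',u')$ whose Jacobian is one. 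This rewrites the $r$-th power factor as a decoupled integral in $(v',u')$, with $|v-u|=|v'-u'|$ and $v,u$ recovered from $(v',u',\omega)$ via the inverse collision formulas.

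The remaining task is to bound, uniformly in $(v',u')$, the residual kernel
\[
J(v',u'):=|v'-u'|^\gamma\!\int_{\mathbb{S}^2}d\omega\,w^{\gamma r/r'}(v)\,b(\theta)\,\sqrt{\mu(u)},
\]
by a product that, together with the pre-existing $h^r(u')$ and with $\int w^{-lr'}\,dv<\infty$ (finite exactly because $l>3/r'$), reconstitutes $\lnorm w^l g\rnorm_{L^r_v}^{r}\lnorm w^l h\rnorm_{L^r_v}^{r}$. The Gaussian decay of $\sqrt{\mu(u)}$ handles the high-energy regime, while the hypothesis $r\geq 4/(3-\gamma)$ is precisely what makes the exponent arithmetic close in the low-energy regime.

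The main obstacle will be this last step: producing a clean kernel bound of the form $J(v',u')\leq C\,w^{-lr}(u')\,\phi(v')$ with $\phi$ integrable in $v'$, so that the extra factor $w^{\gamma r/r'}(v)$ produced by H\"older is absorbed and the two functional factors factorise cleanly. The delicate bookkeeping of weights in the post-collisional parametrization is what forces both the range $r\geq 4/(3-\gamma)$ and the lower bound $l>3/r'$; everything else in the proof is standard manipulation of the collision kernel.
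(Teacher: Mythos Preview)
Your opening move (distributing $w^l(v)$ onto $v',u'$ via conservation) matches the paper, but your H\"older step is the wrong one and the resulting scheme does not close.  By taking $K=|v-u|^\gamma b(\theta)\sqrt{\mu(u)}$ as a \emph{measure} you pick up the residual growth $(\int K)^{1/r'}\sim w^{\gamma/r'}(v)$, which after raising to the $r$-th power becomes $w^{\gamma r/r'}(v)$.  Test what happens to your kernel $J(v',u')$ at $u'=0$, $v'=Ne_1$: then $u=N\cos\theta\,\omega$, $|v|=N|\sin\theta|$, and
\[
J(Ne_1,0)\ \sim\ N^{\gamma}\int_{-1}^{1}(1+N\sqrt{1-s^{2}})^{\gamma r/r'}|s|\,e^{-N^{2}s^{2}/2}\,ds\ \sim\ N^{\gamma}\cdot N^{\gamma r/r'-2}=N^{\gamma r-2}.
\]
Since $w^{-lr}(0)=1$, your target bound $J\le C\,w^{-lr}(u')\,\phi(v')$ (with $\phi$ merely integrable, which in any case would not reconstruct $\lnorm w^{l}g\rnorm_{L^{r}}^{r}$) is impossible whenever $\gamma r>2$; for $\gamma=1$ this already fails for every $r>2$, i.e.\ for almost the entire range $[4/(3-\gamma),\infty]$ claimed in the lemma.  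Your mention of $\int w^{-lr'}\,dv<\infty$ also has no home in this framework.

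The paper avoids this by a \emph{different} H\"older split: it first writes $g(v')=w^{-l}(v')\,(w^{l}g)(v')$ and then puts the \emph{entire} kernel together with the extra decay $w^{-l}(v')$ into the $L^{r'}$ factor, leaving a clean $L^{r}$ factor $[\int|(w^{l}g)(v')(w^{l}h)(u')|^{r}\,du\,d\omega]^{1/r}$ with no residual weight.  The $L^{r'}$ factor
\[
\Bigl[\iint \bigl(|v-u|^{\gamma}|\cos\theta|\bigr)^{r'}e^{-r'|u|^{2}/2}\,w^{-r'l}(v')\,du\,d\omega\Bigr]^{1/r'}
\]
is then estimated via the Carleman-type change of variables $u=v+z$, $z=z_{\parallel}+z_{\perp}$, $y=v+z_{\parallel}=v'$, which reduces it to $\int|y-v|^{(\gamma+1)r'/2-2}w^{-r'l}(y)\,dy\le C(1+|v|)^{(\gamma+1)r'/2-2}$.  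This is bounded in $v$ exactly when $r\ge 4/(3-\gamma)$, and the convergence of the $y$-integral at infinity is what uses $l>3/r'$.  After that, the $L^{r}_{v}$ norm of the second factor closes by the single substitution $du\,dv=du'\,dv'$.  The crucial idea you are missing is to sacrifice the weight $w^{-l}(v')$ into the kernel factor \emph{before} H\"older, rather than trying to recover it afterwards.
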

\begin{proof}
We estimate $w^l \Gamma_{gain} (g,h)$ as follows. Since $w^l (v) \leq C w^l (u')+ C w^l (v')$ by (\ref{cons}),
\begin{equation}
\label{gain}
 \begin{split}
&w^l(v) \Gamma_{gain} (g,h) (v)\\ &\hspace{5mm} \leq  {C \over \sqrt{\mu(v)}}\iint |v-u|^\gamma b(\theta) \cdot \left(\sqrt{\mu} g \right) (v') \cdot \left(w^l \sqrt{\mu}h \right)(u')\,du \,d\omega\\ &\hspace{10mm} + {C \over \sqrt{\mu(v)}}\iint |v-u|^\gamma b(\theta) \cdot \left(w^l \sqrt{\mu} g \right) (v') \cdot \left( \sqrt{\mu} h \right)(u')\,du \,d\omega.
\end{split}
\end{equation}
As in Proposition 2.1 of \cite{r2g}, it suffices to estimate only the first term (because, one may interchange $u'$ and $v'$ in the second term. we refer to page 41-42 of \cite{w21}). By H\"older's inequality, the first term is bounded by
\begin{equation}
\label{jhn}
\begin{split}
\iint &|v-u|^\gamma b(\theta) e^{-{|u|^2 \over 2}}w^{-l} (v') \cdot \left(w^l g\right)(v') \cdot \left(w^l h\right)(u') \,du\,d\omega\\&\hspace{5mm} \leq C \left[\iint \left[|v-u|^\gamma \left|\cos \theta \right|\right]^{r'} e^{-{r'}{|u|^2 \over 2}} w^{-{r'l }} (v')\,du\,d\omega \right]^{1 \over r'} \\
& \hspace{30mm} \times \left[ \iint \left|\left(w^l g\right)(v') \cdot \left(w^l h\right)(u') \right|^r\,du\,d\omega\right]^{1 \over r},
 \end{split}
\end{equation}
with the standard modification when $r=\infty$. By changing $u=z+v$ and spliting $z_{||}=[z \cdot \omega]\omega,$ $z_{\perp}=z-z_{||}$, the integral of the first factor can be bounded by
\begin{equation}
\label{qqa}
 \begin{split}
C\iint \left[|z|^{\gamma-1} |z_{||}|\right]^{r'} e^{-{r'}{|z+v|^2 \over 2} } w^{-{r'l}} (z_{||}+v)\left|z_{||}\right|^{-2} dz_{\perp}\,dz_{||}.
 \end{split}
\end{equation}
The further substitution $y=z_{||}+v$ and the inequality $|z| \geq \sqrt{|z_{||}| \cdot |z_{\perp}|}$ yield that (\ref{qqa}) is bounded by
\begin{equation}
\label{ph}
 \begin{split}
\iint &\left|y-v \right|^{{{{\gamma+1} \over 2}{r'}}-2} e^{-{r'}{|y+z_{\perp}|^2 \over 2}}w^{-{r'l}} (y)\,dz_{\perp}\,dy\\ & \leq C \int \left|y-v \right|^{{{\gamma+1} \over 2}{{r'}-2}}w^{-{r'l}} (y)\,dy\\& \leq C_{r,l} (1+|v|)^{{{\gamma+1} \over 2}{r'}-2}.
 \end{split}
\end{equation}
This is bounded if $r \geq 4/(3-\gamma)$, so by taking the $L^r$ norm of (\ref{gain}) and noting that $du\,dv=du'\,dv'$, we can obtain the lemma. 
\end{proof}
\begin{rem}
When $r \in (1,4/(3-\gamma))$ note that (\ref{ph}) is not bounded, but a simple modification of the argument in the proof of the lemma shows that for $l>1/r'+(\gamma+1) /2$, we have
\begin{equation}
\label{bbg}
 \begin{split}
\lnorm w^{l-{{\gamma+1} \over 2}+ {2 \over r'}} \Gamma_{gain} (g,h)\rnorm_{L^r_v} \leq C_r \lnorm w^l g \rnorm_{L^r_v} \lnorm w^l h \rnorm_{L^r_v}.
 \end{split}
\end{equation}  
\end{rem}
We now prove Theorem \ref{t2}. To this end, rewrite the mild form of (\ref{lrb}) as follows.
\begin{equation}
\label{sxa}
 \begin{split}
f(t,x,v)& = \, e^{-\int_0^t g_{f} (s_1,y+vs_1,v)\,ds_1}f_0 (y,v)\\
&\hspace{5mm} + \int_0^t e^{-\int_s^t g_{f} (s_1,y+vs_1,v)\,ds_1} Kf (s,y+vs,v)\,ds\\
& \hspace{5mm}+ \int_0^t e^{-\int_s^t g_{f} (s_1,y+vs_1,v)\,ds_1} \Gamma_{gain}(f,f) (s,y+vs,v)\,ds,
 \end{split}
\end{equation}
Here we have used the notation $y=x-vt$, and for a function $r(s_1,x,v)$,
\begin{equation}
\label{vc}
 \begin{split}
g_{r} (s_1,x,v)=\iint du \,d\omega |v-u|^\gamma b(\theta) \left[\mu(u)+\sqrt{\mu(u)}r (s_1,x,u)\right]. \nonumber
 \end{split}
\end{equation}

\begin{proof}[Proof of Theorem \ref{t2}]
We use the following iterating sequence ($n \geq 0$).
\begin{equation}
\label{kjb}
 \begin{split}
f^{n+1}(t,x,v)& = \, e^{-\int_0^t g_{f^n} (s_1,y+vs_1,v)\,ds_1}f_0 (y,v)\\
& \hspace{5mm}+ \int_0^t e^{-\int_s^t g_{f^n} (s_1,y+vs_1,v)\,ds_1} Kf^n (s,y+vs,v)\,ds\\
& \hspace{5mm}+ \int_0^t e^{-\int_s^t g_{f^n} (s_1,y+vs_1,v)\,ds_1} \Gamma_{gain}(f^n,f^n) (s,y+vs,v)\,ds.
 \end{split}
\end{equation}
We set $f^n (0,x,v)=f_0$ for $n \geq 1$, and $f^0=0.$ 
It is easily verified as in Proposition 2.1 of \cite{r2g} that $\mu+\sqrt{\mu}f^n \geq 0$ if $\mu+\sqrt{\mu}f_0 \geq 0$. First, we consider the case $r \in [4/(3-\gamma),\infty]$ and we will show that if $\sup_{0 \leq t \leq T^\star} || w^l f^n (t) ||_{L^r_v L^\infty_x} \leq M$ then $\sup_{0 \leq t \leq T^\star} || w^l f^{n+1} (t) ||_{L^r_v L^\infty_x} \leq M$. 
We denote by $k(v,u)$ the kernel of $K$ where $k(v,u)$ satisfies
\begin{equation}
\label{jk1}
\left|k (v,u)\right| \leq C |v-u| e^{-{{|v|^2+|u|^2} \over 8}} + C |v-u|^{-1}{e^{-{|v-u|^2 \over 8}-{{\left[|v|^2 -|u|^2\right]^2} \over 8|v-u|^2}}},
\end{equation}
and for $l \in \Bbb{R}$,
\begin{equation}
\label{jk}
\int \left|k(v,u) \right| {{w^l (v)} \over {w^l (u)}}\,du \leq C (1+|v|)^{-1}.
\end{equation}
For the proof, see Lemma 7 of \cite{r112} for instance. (When $l <0$ use the result for $l \geq 0$ and $|u| \leq |u-v|+|v|$ ) From (\ref{jk}) and Lemma \ref{lem1}
we can obtain
\begin{equation}
 \begin{split}
\lnorm w^l f^{n+1} (t) \rnorm_{L^r_v L^\infty_x} \leq \lnorm w^l f^{n+1} (0) \rnorm_{L^r_v L^\infty_x}&+C t \lnorm w^l f^{n} \rnorm_{L^\infty_{t} L^r_v L^\infty_x}\\ & +C_r t  \lnorm w^l f^{n} \rnorm^2_{L^\infty_t L^r_v L^\infty_x}, \nonumber
 \end{split}
\end{equation}
and hence
$$\lnorm w^l f^{n+1} \rnorm_{L^\infty_{T^\star} L^r_v L^\infty_x} \leq M/2+C MT^\star+C_r M^2T^\star \leq M$$
when $T^\star$ is sufficiently small. As for uniqueness, we take the difference $f-h$ where $f$ and $h$ satisfy (\ref{sxa}), as follows.
\begin{equation}
\label{xxc1}
 \begin{split}
w^{{l} \over 2} (v) & \left|\left[f-h\right](t,x,v)\right| \\ & \hspace{-0mm} \leq \left\{w^{{l} \over 2} (v) \left|f_0 (y,v) \right|+\int_0^t w^{{l} \over 2} (v) \left|K f (s,y+vs,v) \right| \,ds\right.\\ & \hspace{0mm}\left.\hspace{25mm}+ \int_0^t w^{{l} \over 2} (v) \left|\Gamma_{gain} (f,f)(s,y+vs,v) \right|\,ds \right\}\\ &\hspace{45mm}
 \times \int_s^t \left|\left[g_f-g_h \right](s_1,y+vs_1,v) \right|\,ds_1\\ & \hspace{5mm}+ \int_0^t w^{{l} \over 2} (v) \left|K\left[f-h \right] (s,y+vs,v)\right|\,ds\\ & \hspace{5mm} +\int_0^t w^{{l} \over 2} (v) \left|\Gamma_{gain} (f-h,f)(s,y+vs,v) \right|\,ds\\ &\hspace{5mm}+\int_0^t w^{{l} \over 2} (v) \left|\Gamma_{gain} (h,f-h)(s,y+vs,v) \right|\,ds.
 \end{split}
\end{equation}
Here we have used the inequality $|e^{-a}-e^{-b}| \leq |a-b|,\,\forall a,b \geq 0$. Note that $l>2 \gamma$ and $\nu(v)=w^\gamma (v).$  Clearly $$\int_s^t \left|\left[g_f-g_h \right](s_1,y+vs_1,v) \right|\,ds_1 \leq C_r w^{l \over 2} (v) t \lnorm f-h \rnorm_{L^\infty_{t}L^r_v L^\infty_x},$$
so the first term on the right hand side of (\ref{xxc1}) is bounded by
$$C_r M t \lnorm  f-h  \rnorm_{L^\infty_t L^r_v L^\infty_x}.$$
By (\ref{cons}), the second term from the last of (\ref{xxc1}) is bounded by
\begin{equation}
 \begin{split}
 C &\iint du\,d\omega\,{w^\gamma (v) \over [w(v')w(u')]^{l \over 2}} e^{-{|u|^2 \over 2}} \left(w^l f \right)(v') \cdot \left(w^{l \over 2} [f-h] \right)(u')\\&+\iint du\,d\omega\,|v-u|^\gamma b(\theta) e^{-{|u|^2 \over 2}}w^{-l} (v') \left(w^l f\right)(v') \cdot \left(w^{l \over 2}[f-h] \right)(u'). \nonumber
 \end{split}
\end{equation}
As in the proof of Lemma \ref{lem1}, its $L^\infty_t L^r_v L^\infty_{x}$ norm is bounded by
$$C_r M t \lnorm w^{l \over 2} [f-h] \rnorm_{L^\infty_{t} L^r_v L^\infty_x}.$$
Similarly for the last term of (\ref{xxc1}). We have
\begin{equation*}
 \begin{split}
\lnorm w^{{l} \over 2} \left[f-h \right]\rnorm_{L^\infty_{T^\star} L^r_v L^\infty_x} \leq C_r MT^\star \lnorm w^{{l} \over 2} \left[ f-h \right]  \rnorm_{L^\infty_{T^\star} L^r_v L^\infty_x}.
 \end{split}
\end{equation*}
Hence uniqueness follows. Similarly, we can also prove that $(f_n)$ is a Cauchy sequence. Letting $n \rightarrow \infty$ we obtain a unique mild solution (\ref{sxa}), $f(t,x,v)$, in $[0,T^\star] \times \Omega \times \Bbb{R}^3$. For the remaining assertions we refer to the proof of Proposition 2.1 of \cite{r2g}. 

For the other case $r \in (1,4/(3-\gamma))$ we replace the $L^\infty_t L^r_v L^\infty_x$ norm by $L^r_v L^\infty_{t,x}$ and use (\ref{bbg}) and the fact $$\int_0^t e^{-\nu(v)(t-s)} \nu^\delta (v)\,ds \leq \eta+C_\eta t,\quad 0 \leq \delta <1,$$
for any $\eta>0$, and Lemma \ref{q11e} below. Taking $\gamma\delta=(\gamma+1)/2-2/r'$, the $L^r_v L^\infty_{t,x}$ norm of the last term of (\ref{kjb}) is bounded by
\begin{equation}
 \begin{split}
 \int_0^t ds \,e^{-\nu(v)(t-s)} \nu^\delta (v)\lnorm w^{l-\gamma \delta} \Gamma_{gain} (f^n,f^n) \rnorm_{L^r_v L^\infty_{t,x}} \leq C_r \eta \lnorm w^l f^n \rnorm^2_{L^r_v L^\infty_{t,x}}. \nonumber
 \end{split}
\end{equation}
 The remaining proof is a simple modification of the case $r \geq 4/(3-\gamma)$.
\end{proof}
With the same assumptions as Theorem \ref{t2}, we have the following lemma for the sequence (\ref{kjb}).
\begin{lem} 
\label{q11e}
For any $\eta>0$, there exists $T^\star (\eta,M)>0$ such that if
$$\sup_{0 \leq t \leq T^\star, \,x \in \Bbb{R}^3}\int dv\,e^{-{|v|^2 \over 4}}\left|f_0 (x-vt,v) \right| \leq \eta/2,$$
and $$\sup_{0 \leq t \leq T^\star}\lnorm w^l f^{n-1} (t) \rnorm_{L^r_v L^\infty_x} \leq M,$$
then 
\begin{equation} 
\label{io}
\sup_{0 \leq s_1 \leq T^\star, \,x\in \Bbb{R}^3}\int du\,e^{-{|u|^2 \over 4}} \left| f^n (s_1,y+vs_1,u) \right| \leq \eta.
\end{equation}
Moreover, we have
\begin{equation}
\label{ioo}
-\int_s^t ds_1\,g_{f^n} (s_1,y+vs_1,v) \leq -{\nu(v)(t-s) /2},
\end{equation}
when $\eta$ is sufficiently small and $0 \leq t \leq T^\star.$
\label{lem2} 
\end{lem}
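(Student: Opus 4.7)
The plan is to substitute the iteration formula (\ref{kjb}) for $f^n$ into the left-hand side of (\ref{io}) and estimate each of the three resulting pieces separately. Since $g_{f^{n-1}}\ge 0$, the exponential factors in (\ref{kjb}) are bounded by $1$ and may be dropped. Because for fixed $v$ the point $z=y+vs_1=x-v(t-s_1)$ ranges over $\mathbb{R}^3$ as $x$ does, it suffices to bound
$$\sup_{0\le s_1\le T^\star,\,z\in\mathbb{R}^3}\int du\,e^{-|u|^2/4}|f^n(s_1,z,u)|;$$
the free-streaming contribution is $\le\eta/2$ directly from the hypothesis on $f_0$.

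For the $K$-piece, passing $\sup_z$ through the $u'$-integral defining $K$ and pairing via H\"older,
$$\int e^{-|u|^2/4}\sup_z|Kf^{n-1}(s,z,u)|\,du\le\|e^{-|u|^2/4}w^{-l}\|_{L^{r'}_u}\|w^l\sup_z Kf^{n-1}(s,z,\cdot)\|_{L^r_u},$$
the second factor is $\le CM$ by the standard $L^r$-boundedness of $K$ that follows from (\ref{jk}); time integration yields the bound $CMT^\star$. For the $\Gamma_{gain}$-piece, passing $\sup_z$ inside the kernel integrals and applying the same H\"older pairing reduces matters to $\|w^l\Gamma_{gain}(\sup_z f^{n-1},\sup_z f^{n-1})\|_{L^r_v}$, which is $\le C_{r,l}M^2$ by Lemma \ref{lem1} when $r\ge 4/(3-\gamma)$; in the complementary range $r<4/(3-\gamma)$ I would use (\ref{bbg}), absorbing the $w^{-(\gamma+1)/2+2/r'}$ loss into the Gaussian factor $e^{-|u|^2/4}$. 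Choosing $T^\star$ small enough that $(CM+C_{r,l}M^2)T^\star\le\eta/2$ then yields (\ref{io}).

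For (\ref{ioo}), I would split $g_{f^n}(s_1,z,v)=\nu(v)+R(s_1,z,v)$, where $R$ is the contribution from the $\sqrt{\mu(u)}f^n$ term in the definition of $g_{f^n}$. Using $|v-u|^\gamma\le w^\gamma(v)w^\gamma(u)$, $\int_{\mathbb{S}^2}b(\theta)\,d\omega\le C$, and the elementary bound $\sqrt{\mu(u)}w^\gamma(u)\le Ce^{-|u|^2/4}$ valid for $0\le\gamma\le 1$, one has
$$|R(s_1,z,v)|\le Cw^\gamma(v)\int e^{-|u|^2/4}|f^n(s_1,z,u)|\,du\le C\eta\,\nu(v)$$
by (\ref{io}) and $\nu(v)\approx w^\gamma(v)$. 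Hence $g_{f^n}\ge(1-C\eta)\nu(v)\ge\nu(v)/2$ once $\eta$ is small, and integrating in $s_1$ gives (\ref{ioo}). The only non-routine step is the subcritical range $r<4/(3-\gamma)$ of the $\Gamma_{gain}$ estimate, where one must lose a bit of velocity weight in Lemma \ref{lem1}; but the rapid decay of $e^{-|u|^2/4}$ absorbs this loss with room to spare.
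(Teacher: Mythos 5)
Your proof is correct and follows the same overall strategy as the paper: expand $f^n$ via the iteration (\ref{kjb}), drop the exponential factors (since $g_{f^{n-1}}\ge 0$), bound the free-streaming term by the hypothesis, estimate the $K$-term and $\Gamma_{gain}$-term by something of order $Ms_1$ and $M^2 s_1$, and then deduce (\ref{ioo}) by writing $g_{f^n}=\nu(v)+R$ and bounding $R$ by $C\eta\,\nu(v)$. The one place you deviate technically is the $\Gamma_{gain}$ piece: you pass $\sup_z$ inside, apply H\"older against $e^{-|u|^2/4}w^{-l}\in L^{r'}_u$, and then invoke the already-proved Lemma \ref{lem1} (or (\ref{bbg}) in the range $r<4/(3-\gamma)$, with the weight loss absorbed by the Gaussian). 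The paper instead estimates the $L^1_u$ integral of $e^{-|u|^2/4}\Gamma_{gain}$ directly, exposing the Gaussian $e^{-(|u'|^2+|u_1'|^2)/8}$ via the energy identity in (\ref{cons}), changing variables $du\,du_1=du'\,du_1'$, and factoring into $\bigl[\int\|f^{n-1}(u')\|_{L^\infty_x}\,du'\bigr]^2\le C_{r,l}M^2$ by H\"older with $l>3/r'$. Both routes yield the same $C_{r,l}M^2 s_1$ bound; yours is a touch slicker because it reuses the $\Gamma_{gain}$ lemma, while the paper's is self-contained and avoids having to track the two sub-ranges of $r$ separately. Either way the proof is sound.
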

\begin{proof}
From (\ref{kjb}), we get
\begin{equation*}
\begin{split}
\int & e^{-{|u|^2 \over 4}} \left| f^n (s_1,y+vs_1,u) \right|\,du\\& \leq \int e^{-{|u|^2 \over 4}} \left| f_0 (y+vs_1-us_1,u) \right|\,du\\&\hspace{5mm}+\int_0^{s_1} \iint e^{-{|u|^2 \over 4}}\left| k(u,u_1)\right|\cdot  \lnorm f^{n-1} (s_2,u_1) \rnorm_{L^\infty_x} \,du_1\,du\,ds_2\\&\hspace{5mm}+\int_0^{s_1} \int e^{- {|u|^2 \over 4}} \Gamma_{gain} \left[ \lnorm f^{n-1} \rnorm_{L^\infty_x},\lnorm f^{n-1} \rnorm_{L^\infty_x}\right] (s_2,u)\,du\,ds_2.
\end{split}
\end{equation*}
By (\ref{jk}), the second term on the right hand side is bounded by 
\begin{equation}
 \begin{split}
 \int_0^{s_1} ds_2 & \,\int du \left| k(u,u_1) \right| \int du_1\,\lnorm f^{n-1} (s_2,u_1)\rnorm_{L^\infty_x} \\& \leq C \int_0^{s_1} ds_2 \int du_1\,\lnorm f^{n-1} (s_2,u_1)\rnorm_{L^\infty_x}\\ &\leq C_{r,l} M s_1.\nonumber
 \end{split}
\end{equation}
Using (\ref{cons}) and the fact $du\,dv=du'\,dv'$, the last term is bounded by
\begin{equation}
 \begin{split}
C \int_0^{s_1} ds_2 \iiint & du\,du_1\,d\omega \, e^{-{1 \over 8} \left[|u'|^2+|u'_1|^2 \right]} \lnorm f^{n-1} (s_2,u')\rnorm_{L^\infty_x} \cdot \lnorm f^{n-1} (s_2,u'_1)\rnorm_{L^\infty_x} \\& \leq C \int_0^{s_1} ds_2 \left[\int du'\, \lnorm f^{n-1} (s_2,u')\rnorm_{L^\infty_x} \right]^2  \\ & \leq C_{r,l} M^2 s_1.\nonumber
 \end{split}
\end{equation}
We have
\begin{equation*}
\begin{split}
\int e^{-{|u|^2 \over 4}} \left| f^n (s_1,y+vs_1,u) \right|\,du &\leq \eta/2+C_{r,l} Ms_1+C_{r,l} M^2 s_1,
\end{split}
\end{equation*}
and hence we have (\ref{io}) if we choose $T^\star$ small. Moreover, (\ref{ioo}) follows from this and
\begin{equation*}
\begin{split}
-g_{f^n} &(s_1,y+vs_1,v) \\&\leq -\nu(v)+\iint |v-u|^\gamma b(\theta)\sqrt{\mu(u)} \left| f^n (s_1,y+vs_1,u) \right|\,du\,d\omega\\& \leq -\nu(v)+C\nu(v) \int e^{-{|u|^2 \over 4}} \left| f^n (s_1,y+vs_1,u) \right|\,du.
\end{split}
\end{equation*}
\end{proof}
\section{Global Existence}
It is important to bound the nonlinear term by using the $L^1_v$ norm.
\begin{lem} \label{ineq1}
Let $r \in (1,\infty]$, $l > 3/r'$, $n>3$, and $g(v) \geq 0.$ For any $\eta>0$ we have
\begin{equation}
\label{sscv}
 \begin{split}
\lnorm w^{l-\gamma} \Gamma_{gain} (g,g) \rnorm_{L^r_v} \hspace{0mm}\leq C_{r,\eta} \lnorm g \rnorm^{1 \over nr'}_{L^1_v} \lnorm w^l g \rnorm^{{1}+{1 \over r}+{1 \over n'r'}}_{L^r_v} + C_{r,l} \eta \sum_{p=1,r} \lnorm w^l g \rnorm^{1+{1 \over p}}_{L^r_v},
 \end{split}
\end{equation}
and 
\begin{equation}
\lnorm w^{l-\gamma} \Gamma_{loss} (g,g) \rnorm_{L^r_v} \leq C_r \lnorm g \rnorm_{L^1_v} \lnorm w^l g \rnorm_{L^r_v}.
\end{equation}
\end{lem}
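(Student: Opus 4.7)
The plan is to handle the loss term directly and the gain term via a Carleman-type change of variables, a four-region decomposition, and an $L^1_v$–$L^r_v$ interpolation. For the loss term, using $|v-u|^\gamma \leq C w^\gamma(v) w^\gamma(u)$, $\int_{\mathbb{S}^2} b(\theta)\,d\omega \leq C$, and the uniform bound $\sup_u w^\gamma(u)\sqrt{\mu(u)} < \infty$, one obtains at once
$$\Gamma_{loss}(g,g)(v) \leq C\,w^\gamma(v)\,g(v)\,\lnorm g\rnorm_{L^1_v},$$
so that multiplying by $w^{l-\gamma}(v)$ and taking the $L^r_v$-norm yields the stated inequality for $\Gamma_{loss}$.

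For the gain term, I would first use $w^l(v) \lesssim w^l(v') + w^l(u') + w^l(u)$ (absorbing the $w^l(u)$-piece into the Gaussian $\sqrt{\mu(u)}$) together with the $v'\leftrightarrow u'$ symmetry exploited in Lemma \ref{lem1} to reduce the task to controlling the $L^r_v$-norm of
$$T(v) := \int du\,d\omega\,b(\theta)\,w^\gamma(u)\,\sqrt{\mu(u)}\,[w^l g](v')\,g(u'),$$
where the factor $w^\gamma(u)$ arises from $|v-u|^\gamma w^{-\gamma}(v) \leq C w^\gamma(u)$. I would then apply the Carleman change of variables $(u,\omega) \to (v', u')$ (with $(u'-v) \perp (v'-v)$ and $u = v'+u'-v$), rewriting $T(v)$ as a double integral over $(v', u')$ against a kernel $\mathcal{K}(v,v',u')$ carrying both a Jacobian singularity near $v' = v$ and the Gaussian decay $\sqrt{\mu(v'+u'-v)}$.

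Next, I would split the $(v', u')$-integration into four pieces determined by two cutoffs: $|v-v'|$ below or above a small threshold $\delta$ (to isolate the Jacobian singularity), and $|v'|+|u'|$ below or above a large cutoff $N=N(\eta)$. On the two pieces where $|v'|+|u'| \geq N$, the Gaussian $\sqrt{\mu(v'+u'-v)}$ provides an $O(\eta)$-factor, and a direct H\"older application reproduces the second term of (\ref{sscv}), namely $C_{r,l}\eta\,\lnorm w^l g\rnorm_{L^r_v}^{1+1/p}$ with $p=1$ or $p=r$ depending on whether the $u'$-factor is estimated in $L^1$ or in $L^r$. On the two bounded pieces I would apply H\"older with $[w^l g](v') \in L^r_{v'}$ and $g(u') \in L^s_{u'}$ for $1/s = 1/(nr') + (1 - 1/(nr'))/r$, combined with the interpolation $\lnorm g\rnorm_{L^s} \leq \lnorm g\rnorm_{L^1}^{1/(nr')}\lnorm g\rnorm_{L^r}^{1-1/(nr')}$ and the trivial bound $\lnorm g\rnorm_{L^r} \leq \lnorm w^l g\rnorm_{L^r}$ (since $w\geq 1$); multiplying with the $\lnorm w^l g\rnorm_{L^r_v}$ contribution from $[w^l g](v')$ produces exactly $\lnorm g\rnorm_{L^1_v}^{1/(nr')}\lnorm w^l g\rnorm_{L^r_v}^{1+1/r+1/(n'r')}$, matching the first term of (\ref{sscv}).

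The main technical obstacle is bookkeeping the exponents and verifying that the various kernel norms arising after the H\"older step are finite. The condition $n>3$ in the hypothesis enters precisely through this finiteness: the Jacobian singularity near $v'=v$, raised to the appropriate H\"older power and integrated against a three-dimensional measure, imposes an integrability constraint which, when tracked through the interpolation exponent $1/(nr')$, becomes $n>3$. Executing the four-region split and the symmetrization consistently so that all weights and exponents align with the statement is where the real labour lies; no single step is especially deep, but the accounting is dense.
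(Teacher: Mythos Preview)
Your loss-term argument is correct and matches the paper. The gain-term strategy---a four-region split plus H\"older and $L^1$--$L^r$ interpolation---is also the right one, and your exponent bookkeeping (in particular $1+1/r+1/(n'r')=2-1/(nr')$) is consistent with the statement. However, the specific decomposition you propose has a genuine gap.

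The problem is the claim that on the region $|v'|+|u'|\geq N$ the Gaussian $\sqrt{\mu(v'+u'-v)}$ furnishes an $O(\eta)$ factor. In Carleman coordinates one has $(v'-v)\perp(u'-v)$, hence $|u|^2=|v'-v|^2+|u'-v|^2$; this is what the Gaussian sees, not $|v'|+|u'|$. For $|v|$ large one may have $|v'|,|u'|\geq N/2$ while both $v'$ and $u'$ lie within $O(1)$ of $v$, so that $|u|=O(1)$ and the Gaussian gives no decay. Your ``far'' region therefore carries no smallness uniformly in $v$, and after the $L^r_v$-norm you do not recover the $\eta$-terms. The auxiliary $\delta$-cutoff on $|v-v'|$ does not help here either: the Jacobian singularity $|v-v'|^{-2}$ is already locally integrable in three dimensions, and the real issue is the lack of decay for large $|v|$, which your split never addresses.

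The paper avoids this by keeping the $(u,\omega)$ variables (no Carleman) and splitting first on $|u|\geq L$ versus $|u|\leq L$---this is precisely where the Gaussian $e^{-|u|^2/2}$ lives, so the first piece is trivially $O(e^{-L^2/4})$. Within $|u|\leq L$ it then splits on $|v|\leq 2L$ versus $|v|\geq 2L$, and for $|v|\geq 2L$ further on $|(u-v)\cdot\omega|\leq L$ versus $\geq L$; the last two pieces give the $\eta$-terms with exponents $p=1$ and $p=r$ respectively. Only on the bounded piece $|u|\leq L$, $|v|\leq 2L$ is the $L^1_v$-norm extracted: the substitution $u=z+v$, $z=z_{||}+z_\perp$, $y=z_{||}+v=v'$ produces a factor $\int_{|y|\leq 5L} g^{1/n}(y)\,|y-v|^{-2}\,dy$, and the hypothesis $n>3$ (equivalently $2n'<3$) is exactly what makes $|y-v|^{-2n'}$ locally integrable in $\mathbb{R}^3$. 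If you redo your split along these lines---cut on $|u|$ and on $|v|$ rather than on $|v'|+|u'|$ and $|v-v'|$---your H\"older/interpolation step goes through and delivers the stated bound.
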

\begin{proof} The inequality for the loss term is trivial, so we only estimate the gain term. Using (\ref{cons}) and then interchanging $v'$ and $u'$ as in Lemma \ref{lem1},
\begin{equation}
\label{ineq2}
 \begin{split}
w^{l-\gamma}(v)\Gamma_{gain}(g,g)(v)& \leq C \iint \left[w^l(v')+w^l (u')\right] e^{-{{|u|^2} \over 2}} g(v')g(u')\,du\,d\omega \\ & \leq C\iint g(v') \cdot e^{-{{|u|^2} \over 2}} \cdot \left(w^l g\right)(u')\,du\,d\omega. \nonumber
 \end{split}
\end{equation}
We split the last integral into four parts. First, for $L>0$,
\begin{equation}
\label{aac}
 \begin{split}
C &\iint_{|u| \geq L} g(v') \cdot e^{-{{|u|^2} \over 2}} \cdot \left(w^l g\right)(u') \,du \,d\omega \\ & \leq C e^{-{L^2 \over 4}} \iint e^{-{{|u|^2} \over 4}} \cdot g (v') \cdot \left(w^l g\right)(u') \,du \,d\omega \\ & \leq C e^{-{L^2 \over 4}} \left[ \iint \left(w^l g \right)^r (v') \cdot \left(w^l g\right)^r (u') \,du \,d\omega \right]^{1 \over r},
 \end{split}
\end{equation}
so by $du\,dv =du'\,dv'$, the $L^r_v$ norm of (\ref{aac}) is bounded by $C e^{-{L^2 \over 4}} || w^l g ||^2_{L^r_v}$. Next, let ${|v| \leq 2L}$ and set $k=1+(r-1)/n'$ for fixed $n>3$. Then we get $1=1/(nr')+k/r$ and
\begin{equation}
\label{km}
 \begin{split}
 C&\iint_{|u| \leq L} g(v') \cdot e^{-{{|u|^2} \over 2}} \cdot \left(w^l g\right)(u')\,du\,d\omega \\& \leq C \left[ \iint_{|u| \leq L} g^{1 \over n} (v') \,e^{-{{|u|^2} \over 2}} \,du\,d\omega \right]^{1 \over r'} \left[\iint_{|u| \leq L} g^{k} (v') \cdot \left(w^l g \right)^{r} (u')\,du\,d\omega \right]^{1 \over r}.
 \end{split}
\end{equation}
For the first factor, we use the same change of variables as (\ref{qqa}) and (\ref{ph}). The following integral calculus holds. 
\begin{equation}
\label{po}
 \begin{split}
 \iint_{|u| \leq L} g^{1 \over n} (v') e^{-{|u|^2 \over 2}}\,du\,d\omega & \leq C \iint g^{1 \over n} (v+z_{||}) e^{-{{|v+z|^2} \over 2}} \, {1 \over {|z_{||}|^2}}\,dz_{||}\,dz_{\perp} \\ & \leq C \int_{|y| \leq 5L} g^{1 \over n} (y) {1 \over {|y-v|^{2}}}\,dy \\ & \leq C \left[ \int_{|y| \leq 5L} g (y)\,dy \right]^{1 \over n} \left[\int_{|y| \leq 5L} {1 \over {|y-v|^{2n'}}}\,dy \right]^{1 \over n'} \\ &\leq C_{r,L} \lnorm g \rnorm^{1 \over n}_{L^1_v}.
 \end{split}
\end{equation}
By $du\,dv=du'\,dv'$ and $k<r$, the $L^r (\{|v| \leq 2L\})$ norm of the second factor of (\ref{km}) is bounded by
\begin{equation}
 \begin{split}
 C_{r,L} & \left[\int_{|v'| \leq 5L} dv' g^{k} (v') \int_{|u'| \leq 5L} du'\, \left(w^l g \right)^{r} (u') \right]^{1 \over r} \\ & \leq C_{r,L} \lnorm g \rnorm^{k \over r}_{L^r_v} \lnorm w^l g \rnorm_{L^r_v}\\& \leq C_{r,L} \lnorm w^l g \rnorm^{{k \over r}+1}_{L^r_v}.
 \end{split}
\end{equation}
The $L^r (\{|v| \leq 2L\})$ norm of (\ref{km}) is bounded by $C_{r,L} ||g||^{1 \over nr'}_{L^1_v} ||w^l g||^{1+{k \over r}}_{L^r_v}$. Lastly, when $|v| \geq 2L$, we consider the two cases $|(u-v) \cdot \omega| \leq L$ and $|(u-v) \cdot \omega|\geq L$. For the former, then $|v'| = |v+[(u-v) \cdot \omega] \omega| \geq 2L-L=L$, so
\begin{equation}
\label{km1}
 \begin{split}
 C & \iint_{|(u-v) \cdot \omega| \leq L} e^{-{|u|^2 \over 2}} \cdot g(v') \cdot \left(w^l g \right)(u')\,du\,d\omega \\& \leq C L^{-l} \iint  e^{-{|u|^2 \over 2}} \cdot \left(w^l g \right) (v') \cdot \left(w^l g \right) (u')\,du\,d\omega.
 \end{split}
\end{equation}
As in (\ref{aac}), the $L^r (\{|v| \geq 2L\})$ norm of (\ref{km1}) is bounded by $CL^{-l} ||w^l g||^2_{L^r_v}$. In the latter, since $|z_{||}| \geq L$, as in (\ref{km}) and (\ref{po}) (take $n=1$), we can get 
\begin{equation}
 \begin{split}
 \lnorm C  \iint_{|(u-v) \cdot \omega| \geq L} e^{-{|u|^2 \over 2}} \cdot g(v') \cdot \left(w^l g \right)(u')\,du\,d\omega \rnorm_{L^r (\{|v| \geq 2L\})} \leq C_{r,l} L^{-{2 \over r'}} \lnorm w^l g \rnorm^{1+{1 \over r}}_{L^r_v},\nonumber
\end{split}
\end{equation}
and then we have (\ref{sscv}) by collecting above estimates and choosing $L$ large.
\end{proof}
For simplicity, we use the notation $\mathcal{E}_0 =\left[\left|M_0\right|+\left|E_0\right|+\left|H_0\right|\right]^{m^{-1}}$ for $0<m<1$ sufficiently small, and $k_l (v,u)=k(v,u) \nu^l (v) / \nu^l (u)$ where $k(v,u)$ is the kernel of the integral operator $K$. As in \cite{r1} or \cite{r2g}, when $t-s \geq \kappa \,(0<\kappa<1),\,N>0$, we can obtain
\begin{equation}
\label{bbf}
 \begin{split}
\iint_{|v| \leq 5N,\, |u| \leq 5N} \left|f(s,x-v(t-s),u)\right|\,du\,dv \leq C_N \left(1+{\kappa^{-1}} \right) \mathcal{E}_0,
 \end{split}
\end{equation}
which is the key estimate to global solvability. Recall that $n$ and $k$ were defined in the proof of Lemma \ref{ineq1}. Under the assumption of Theorem \ref{t1}, from (\ref{bbf}), the following two lemmas are valid and then Theorem \ref{t1} follows easily (see Proof of Theorem 1.1 in \cite{r2g}).
\begin{lem}
\label{lem22}
For any $\eta>0$ there exists $C_\eta (r,l)>0$ such that
\begin{equation}
 \begin{split}
\sup_{s \in [T^\star,t]} \lnorm f (s) \rnorm_{L^\infty_x L^1_v} \leq \epsilon+C_{r,l} \eta \sum_{p_1 =1,2} & \lnorm w^l f \rnorm^{p_1}_{L^r_v L^\infty_{t,x}}+C_\eta \mathcal{E}_0\\&+C_{\eta} \mathcal{E}_0 \sum_{p_2 =r,r/k} \lnorm w^l f \rnorm^{1+{1 \over p_2}}_{L^r_v L^\infty_{t,x}},
 \end{split}
\end{equation}
\end{lem}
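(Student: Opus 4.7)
The strategy is to iterate the mild form (\ref{sxa}) once inside the kernel term $Kf$ so that a double backward characteristic $x - v(s-s_1) - u(s_1-s_2)$ appears, and then to split every velocity integration into a bounded part (handled by the key excess bound (\ref{bbf}) and Lemma \ref{ineq1}) and an unbounded tail (handled by the weight $w^l$ under the hypothesis $l > 3/r'+\gamma$).

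Applying Lemma \ref{lem2} to get $e^{-\int_{s_1}^{s} g_f\,d\sigma} \leq e^{-\nu(v)(s-s_1)/2}$, (\ref{sxa}) yields for $s \in [T^\star, t]$
\begin{equation*}
|f(s, x, v)| \leq e^{-\nu(v)s/2}|f_0(x-vs, v)| + \int_0^s e^{-\nu(v)(s-s_1)/2}\bigl[|Kf| + \Gamma_{gain}(|f|, |f|)\bigr](s_1, x - v(s-s_1), v)\,ds_1.
\end{equation*}
After integrating $dv$ and taking $\sup_x$, the initial-datum contribution is $\leq \epsilon$ by (\ref{jji}). For the gain part I would dominate $\Gamma_{gain}(|f|, |f|)(s_1, \cdot, v) \leq \Gamma_{gain}(F_*, F_*)(v)$ with $F_*(v) := \sup_{\sigma,y}|f|(\sigma, y, v)$, absorb the time integral via $\int_0^s e^{-\nu(v)(s-s_1)/2}\,ds_1 \leq 2/\nu(v)$, and bound the resulting $\int \nu^{-1}\Gamma_{gain}(F_*, F_*)\,dv$ by H\"older against $\lnorm \nu^{-1}w^{-(l-\gamma)}\rnorm_{L^{r'}_v}$ (finite since $l > 3/r'$) times $\lnorm w^{l-\gamma}\Gamma_{gain}(F_*, F_*)\rnorm_{L^r_v}$, which is controlled by Lemma \ref{ineq1}. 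Young's inequality absorbs the $\eta$-piece into $\sum_{p_1 = 1,2}\lnorm w^l f\rnorm^{p_1}_{L^r_v L^\infty_{t,x}}$. In the remaining $\lnorm F_*\rnorm_{L^1_v}^{1/(nr')}\lnorm w^l f\rnorm^{1+k/r}_{L^r_v L^\infty_{t,x}}$-piece, $\lnorm F_*\rnorm_{L^1_v}$ is split at $|v| = N$: on $|v| \geq N$ the weight yields the tail factor $\eta\lnorm w^l f\rnorm_{L^r_v L^\infty_{t,x}}$ (contributing the $p_1 = 1$ summand) after choosing $N = N(\eta)$ large, while on $|v| \leq N$ the mild form combined with (\ref{bbf}) yields $C_N\mathcal{E}_0$.

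For the kernel part I would substitute the mild form of $f$ inside $Kf$, obtaining a double integral with kernel $k(v, u)$ and characteristic $x - v(s-s_1) - u(s_1 - s_2)$. Three regions are isolated: on $\{|v| \geq N\} \cup \{|u| \geq N\}$, H\"older with $w^l$-weights and the kernel bound (\ref{jk}) yield a factor $\eta\lnorm w^l f\rnorm_{L^r_v L^\infty_{t,x}}$; on $\{s_1 - s_2 \leq \kappa\}$, short-time integration yields a $\kappa$-small factor; on the remainder, the change of variables $u \mapsto z := x - v(s-s_1) - u(s_1-s_2)$ (with Jacobian $(s_1 - s_2)^{-3}$) decouples $z$ from $u$, so (\ref{bbf}) supplies the factor $C_N(1 + \kappa^{-1})\mathcal{E}_0$. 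Collecting all contributions and choosing first $N$ large and then $\kappa$ small in terms of $\eta$ produces the announced bound. The main obstacle I expect is isolating the $|v - u|^{-1}$ singularity of $k(v, u)$ against the change of variables (requiring an additional cutoff near $v = u$), together with the bookkeeping of the several exponents produced by Lemma \ref{ineq1} so that they assemble, after Young's inequality, into the two sums $\sum_{p_1 = 1, 2}\lnorm w^l f\rnorm^{p_1}$ and $\sum_{p_2 = r, r/k}\lnorm w^l f\rnorm^{1+1/p_2}$ of the statement.
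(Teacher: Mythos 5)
Your proposal diverges from the paper's argument in two places, and one of them is a genuine gap.

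First, a structural point: the paper does \emph{not} iterate the mild form in the proof of Lemma~\ref{lem22}. It starts from (\ref{integraleq}) directly, writes $\int|f(t,x,v)|\,dv\leq\sum_{j=1}^4 G_j$, and splits each $G_j$ ($j=2,3,4$) into four regions: short time $t-s\leq\kappa$; $|u|\geq N$; $|v|\geq 2N$, $|u|\leq N$; and the bounded core $|v|\leq 2N$, $|u|\leq N$. The essential observation is that the $dv$ integral coming from the $L^1_v$ norm, together with the backward characteristic $x-v(t-s)$ sitting in the argument of $f$, \emph{already} supply the double integral $\iint_{|v|\leq 2N,\,|u|\leq N}|f(s,x-v(t-s),u)|\,du\,dv$ to which (\ref{bbf}) applies. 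No second characteristic is needed. The iteration you describe is used in the paper only in Lemma~\ref{lem23}, where the target norm is $L^r_v L^\infty_{t,x}$ and no outer $dv$-integral is available, so one must substitute the mild form inside $Kf$ to manufacture the needed phase-space structure. Your kernel-term plan is therefore more machinery than required (and creates an artificial $|v-u|^{-1}$ obstruction that the paper never encounters, since it works with the compactly supported approximant $k_{l,N}$ satisfying (\ref{hjk1})), but it is not obviously wrong.

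The genuine gap is in your treatment of the collision terms. You propose to dominate $\Gamma_{gain}(|f|,|f|)(s,\cdot,v)\leq\Gamma_{gain}(F_*,F_*)(v)$ with $F_*(v)=\sup_{\sigma,y}|f(\sigma,y,v)|$, apply Lemma~\ref{ineq1}, and then claim $\int_{|v|\leq N}F_*(v)\,dv\leq C_N\mathcal{E}_0$ ``by the mild form combined with (\ref{bbf})''. This cannot work: (\ref{bbf}) controls $\iint_{|v|\leq 5N,|u|\leq 5N}|f(s,x-v(t-s),u)|\,du\,dv$, in which the spatial argument is \emph{pinned to} the integration variable $v$ through the characteristic. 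Once you replace $f(s,x-v(t-s),u')$ and $f(s,x-v(t-s),v')$ by their spatial suprema, that correlation is destroyed, and $\int_{|v|\leq N}\sup_{\sigma,y}|f(\sigma,y,v)|\,dv$ is not small — it is of order $C_N\lnorm w^l f\rnorm_{L^r_v L^\infty_{t,x}}$, with no factor of $\mathcal{E}_0$ or $\eta$ available. Your collision bound then collapses to $C_N\lnorm w^l f\rnorm^2_{L^r_v L^\infty_{t,x}}$ with no small prefactor, which cannot be absorbed. The paper avoids this precisely by \emph{not} taking the sup: in (\ref{llk})–(\ref{llk2}) the position $x-v(t-s)$ is retained, and H\"older is applied so that one factor keeps the $(v,\text{position})$ correlation needed for (\ref{bbf}) while the other factor — and only that one — is estimated by $\lnorm w^l f\rnorm_{L^r_vL^\infty_{t,x}}$. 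In other words, Lemma~\ref{ineq1} is \emph{not} the right tool for Lemma~\ref{lem22}; the paper uses it only in Lemma~\ref{lem23}, where one is willing to pay the price of $\lnorm f\rnorm_{L^\infty_{t,x}L^1_v}$ on the right-hand side, which is then fed back by Lemma~\ref{lem22}.
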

\begin{rem} 
On the interval $[0,T^\star]$, Theorem \ref{t2} yields $||f||_{L^\infty_{T^\star} L^\infty_x L^1_v} \leq C_{r,l} M.$
\end{rem}
\begin{lem} For any $\eta>0$ there exists $C_\eta (r,l)>0$ such that
\label{lem23}
\begin{equation}
\label{xxc}
 \begin{split}
 \lnorm w^l f \rnorm_{L^r_v L^\infty_{t,x}}\leq &CM+C_{\eta} \mathcal{E}_0 +C_{r} \lnorm f \rnorm_{L^\infty_{t,x} L^1_v} \lnorm w^l f \rnorm_{L^r_v L^\infty_{t,x}} \\&+ C_{\eta} \lnorm f \rnorm^{1 \over nr'}_{L^\infty_{t,x} L^1_v} \lnorm w^l f \rnorm^{{1}+{k \over r}}_{L^\infty_{t,x} L^r_v}  \hspace{0mm}+ C_{r,l} \eta \sum_{p=\infty,1,r} \lnorm w^l f \rnorm^{1+{1 \over p}}_{L^r_v L^\infty_{t,x}}.
 \end{split}
\end{equation}
\end{lem}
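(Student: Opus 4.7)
The plan is to return to the mild form (\ref{sxa}) applied to $f$ itself, invoke Lemma \ref{q11e} to bound the exponent by $-\nu(v)(t-s)/2$, and then estimate each of the three resulting pieces --- the initial data term, the $Kf$ integral, and the $\Gamma_{gain}(f,f)$ integral --- in the $L^r_v L^\infty_{t,x}$ norm. The initial data piece $e^{-\nu(v)t/2} w^l f_0(y,v)$ is immediately controlled by $\lnorm w^l f_0 \rnorm_{L^r_v L^\infty_x} \leq M$, yielding the $CM$ term on the right-hand side of (\ref{xxc}).

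For the $\Gamma_{gain}$ piece I would use the factorization $w^l \Gamma_{gain} = \nu \cdot w^{l-\gamma} \Gamma_{gain}$ together with $\int_0^t e^{-\nu(v)(t-s)/2}\nu(v)\,ds \leq 2$ to reduce matters to an $L^r_v$ estimate on $w^{l-\gamma}\Gamma_{gain}(f,f)(s,z,v)$, and then apply Lemma \ref{ineq1} pointwise in $(s,z)$ to $g = f(s,z,\cdot)$. Using the identity $1 + 1/r + 1/(n'r') = 1 + k/r$ and exchanging $\sup_{s,z}$ with the $L^r_v$ norm in the appropriate order, the two terms in (\ref{sscv}) produce exactly the contributions $C_\eta \lnorm f \rnorm^{1/(nr')}_{L^\infty_{t,x} L^1_v} \lnorm w^l f \rnorm^{1+k/r}_{L^\infty_{t,x} L^r_v}$ and $C_{r,l}\eta \sum_{p=1,r}\lnorm w^l f \rnorm^{1+1/p}_{L^r_v L^\infty_{t,x}}$.

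The $Kf$ piece is the main technical step. Pointwise
$$w^l(v) \int_0^t e^{-\nu(v)(t-s)/2} |Kf|(s, y+vs, v)\,ds \leq \int_0^t e^{-\nu(v)(t-s)/2} \int |k_l(v,u)|\cdot (w^l|f|)(s, y+vs, u)\,du\,ds,$$
and I would split both velocity space and time. For $|v| \geq N$, the Schur-type bound (\ref{jk}) gives $\int |k_l(v,u)|\,du \leq C(1+|v|)^{-1}$, producing a factor of $C/N$ that absorbs into the $\eta$-term with $p=\infty$ once $N=N(\eta)$ is large. On $\{|v| \leq N\}$, I would split the time integral at $s = t-\kappa$: the near-past piece $s \in [t-\kappa,t]$ contributes $C\kappa\lnorm w^l f \rnorm_{L^r_v L^\infty_{t,x}}$, again absorbed into the $\eta$-term for $\kappa$ small. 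The far-past piece $s \in [0, t-\kappa]$ is further cut by $|u| \leq N$ (the complement handled by Schur as above), and on the compact region $\{|v|,|u| \leq N\}$ a Hölder interpolation between the compactly-supported $L^1_u$ mass --- bounded via (\ref{bbf}) by $C_N(1+\kappa^{-1})\mathcal{E}_0$ --- and the $L^r_u$ norm of $w^l f$ produces the $C_\eta\mathcal{E}_0$ contribution together with the mixed term $C_r \lnorm f \rnorm_{L^\infty_{t,x} L^1_v} \lnorm w^l f \rnorm_{L^r_v L^\infty_{t,x}}$.

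The principal obstacle, compared to the $r=\infty$ treatment of \cite{r2g}, is the $L^r_v$ handling of the $K$ contribution: one cannot pull $w^l f$ through to an $L^\infty_u$ pointwise bound, so the Hölder interpolation between the compactly-supported $L^1_{v,u}$ smallness of (\ref{bbf}) and the full $L^r$ structure of $w^l f$ must be executed carefully, with the constants tracked so that every resulting term carries either a small prefactor ($\eta$, $\mathcal{E}_0$, or $\lnorm f \rnorm_{L^\infty_{t,x} L^1_v}$) or a strictly superlinear power of $\lnorm w^l f \rnorm_{L^r_v L^\infty_{t,x}}$. This closure structure is precisely what allows Theorem \ref{t1} to follow by combining (\ref{xxc}) with Lemma \ref{lem22} and absorbing the left-hand side.
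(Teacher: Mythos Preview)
Your outline skips the step that actually makes (\ref{bbf}) usable: the paper does \emph{not} estimate the single $Kf$ integral directly. Instead it substitutes (\ref{integraleq}) once more into the $Kf$ term, producing the double-$K$ contribution
\[
H_3(t,x,v)=\iint k_l(v,u_1)k_l(u_1,u_2)\int_0^t\!\!\int_0^{s_1} e^{-\nu(v)(t-s_1)}e^{-\nu(u_1)(s_1-s_2)}\,w^l(u_2)f(s_2,y_2,u_2)\,ds_2\,ds_1\,du_2\,du_1,
\]
with $y_2=y_1-u_1(s_1-s_2)$. The point is that the spatial shift in $f$ now carries the \emph{inner} velocity $u_1$, so on the compact region one integrates over both $u_1$ and $u_2$ \emph{before} taking the outer $L^r_v$ norm, and (\ref{bbf}) applies verbatim to $\iint_{|u_1|\le 2N,\,|u_2|\le 3N}|f(s_2,y_1-u_1(s_1-s_2),u_2)|\,du_2\,du_1$. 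This is what yields the clean $C_\eta\mathcal{E}_0$ term.

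In your single-iteration scheme the spatial shift is $x-v(t-s)$, where $v$ is the \emph{outer} variable being $L^r$'d. The quantity $\int_{|u|\le N}|f(s,x-v(t-s),u)|\,du$ is not controlled by (\ref{bbf}); for fixed $(t,x,v)$ it is only $\le\|f\|_{L^\infty_{t,x}L^1_v}$. After the time integral and the $L^r(\{|v|\le N\})$ norm you get at best $C_N\|f\|_{L^\infty_{t,x}L^1_v}$ with $C_N$ growing in $N$. That term is not on the right-hand side of (\ref{xxc}), and when you feed Lemma~\ref{lem22} back in, the product $C_N\cdot\eta$ does not close because $N=N(\eta)$ was already chosen large to make the $|v|\ge N$ tail small. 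Your claim that (\ref{bbf}) bounds ``the compactly-supported $L^1_u$ mass'' misreads (\ref{bbf}): it is an $L^1_{v,u}$ estimate in which one velocity must sit in the spatial argument, and the single $K$ does not provide that structure inside the $L^r_v$ norm.

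Two smaller points: you invoke Lemma~\ref{q11e} to reduce the exponent in (\ref{sxa}) to $-\nu(v)(t-s)/2$, but that lemma is stated only on $[0,T^\star]$ for the iterates; the paper instead works with (\ref{integraleq}) and the full $\Gamma=\Gamma_{gain}-\Gamma_{loss}$, so that the loss piece of Lemma~\ref{ineq1} produces the term $C_r\|f\|_{L^\infty_{t,x}L^1_v}\|w^lf\|_{L^r_vL^\infty_{t,x}}$. In your (\ref{sxa})-based scheme that term has no source, which is another symptom of the missing double iteration.
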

\begin{proof}[Proof of Lemma \ref{lem22}] From (\ref{integraleq}),
\begin{equation}
\begin{split}
 \int \left| f(t,x,v) \right|\,dv \leq \sum_{j=1}^4 G_j (t,x),\nonumber
\end{split}
\end{equation}
where
\begin{equation}
 \begin{split}
G_1 (t,x)=\int e^{-\nu(v)t} \left|f(y_1,v) \right|\,dv,\nonumber
 \end{split}
\end{equation}
\begin{equation}
 \begin{split}
G_2 (t,x)=\int_0^t ds \int dv \int du \, e^{-\nu(v)(t-s)} \left| k (v,u) f (s,y_1+vs,u) \right|,\nonumber
 \end{split}
\end{equation}
\begin{equation}
 \begin{split}
 G_3 (t,x)= \int_0^t ds \int dv \iint & du\,d\omega \, e^{-\nu(v)(t-s)} |v-u|^\gamma b(\theta) \sqrt{\mu(u)}\\ & \times \left| f (s,y_1+vs,u) f (s,y_1+vs,v) \right|,\nonumber
 \end{split}
\end{equation}
\begin{equation}
 \begin{split}
 G_4 (t,x)=\int_0^t ds \int dv \iint & du\,d\omega \, e^{-\nu(v)(t-s)} \left|v'-u' \right|^\gamma b(\theta) \sqrt{\mu(u)}\\ & \times \left| f (s,y_1+vs,u') f (s,y_1+vs,v') \right|.\nonumber
 \end{split}
\end{equation}
Here we have used the notation $y_1=x-vt$. Note that $|v-u|=|v'-u'|$. We further split $G_j,\, j=2,3,4$ as
\begin{equation}
 \begin{split}
 G_j (t,x) & =\int_{t-\kappa}^t \iint  + \int_0^{t-\kappa}  \int \int_{|u| \geq N}+ \int_0^{t-\kappa}  \int_{|v| \geq 2N} \int_{|u| \leq N} \\& \hspace{19.5mm} +\int_0^{t-\kappa}  \int_{|v| \leq 2N} \int_{|u| \leq N} \{ \cdots \}\, du\,dv\,ds \\ &\equiv G_{j1} (t,x)+G_{j2} (t,x)+G_{j3} (t,x)+G_{j4} (t,x).\nonumber
 \end{split}
\end{equation}
By assumption $G_1 \leq \epsilon.$ First we will show that for any $\eta>0$, if $\kappa$ and $N^{-1}$ are sufficiently small, then $G_{jk} \leq \eta \sum_{p_1 =1,2} || w^l f ||^{p_1}_{L^r_v L^\infty_{t,x}}$ for $j=2,3,4,\,k=1,2,3.$ To see this for $G_{2k}$, recall (\ref{jk}). For $G_{2k},\,k=1,2,3$ we integrate over $v$ before $u$. It is not hard to see that $G_{21} \leq C_{r,l} \kappa || w^l f ||_{L^r_v L^\infty_{t,x}}$. Also,
\begin{equation}
 \begin{split}
 G_{22} & \leq \int_0^{t-\kappa} ds\,e^{-\nu(v)(t-s)} \int_{|u| \geq N} du\, \left(\int dv\,\left|k(v,u) \right| \right) \lnorm f(u) \rnorm_{L^\infty_{t,x}} \\ & \leq CN^{-1} \int du\,\lnorm  f(u) \rnorm_{L^\infty_{t,x}} \leq C_{r,l} N^{-1} \lnorm w^l f \rnorm_{L^r_v L^\infty_{t,x}}.\nonumber
 \end{split}
\end{equation}
Let $|v| \geq 2N$ and $|u| \leq N$. Then $|v-u| \geq N$. Since there is also the case $\gamma=0$, we estimate $G_{23}$ as follows. 
\begin{equation}
 \begin{split}
 G_{23} & \leq \int_{|u| \leq N} du \left(\int_{|v| \geq 2N} dv\,\left|k (v,u) \right|\right) \left(\int_0^{t-\kappa} ds\,e^{-\nu(v)(t-s)}\right)\lnorm f(u) \rnorm_{L^\infty_{t,x}} \\ & \leq C \int du \left(\int dv\,e^{- {N^2 \over 16}} \left|k (v,u) \right| e^{{|v-u|^2 \over 16}} \right) \lnorm f(u) \rnorm_{L^\infty_{t,x}}
 \\& \leq C e^{- {N^2 \over 16}}  \int du\,\lnorm f (u) \rnorm_{L^r_v L^\infty_{t,x}} \\ & \leq C_{r,l} e^{- {N^2 \over 16}}  \lnorm w^l f \rnorm_{L^r_v L^\infty_{t,x}},\nonumber
 \end{split}
\end{equation}
Thus the claim for $G_{2k}$ follows by choosing $\kappa$ and $N^{-1}$ small. The terms $G_{3k}$ and $G_{4k}$ are easy to estimate. Noting $du\,dv=du'\,dv'$ and $3-(l-\gamma)r'<0$, we have 
\begin{equation}
 \begin{split}
 \sum_{k=1,2,3} G_{3k}+ G_{4k} & \leq \int_{t-\kappa}^t ds\, e^{-\nu(v)(t-s)} \left[\int dv\, w^{\gamma} (v)\lnorm f (v) \rnorm_{L^\infty_{t,x}} \right]^2 \\ & \hspace{5mm}+ \int_0^{t-\kappa} ds\, e^{-\nu(v)(t-s)} \left[\int_{|v| \geq N} dv\, w^{\gamma} (v)\lnorm f (v) \rnorm_{L^\infty_{t,x}} \right]^2 \\& \leq C_{r,l} \kappa \lnorm w^l f \rnorm^2_{L^r_v L^\infty_{t,x}} + CN^{3-(l-\gamma)r'}  \lnorm w^l f \rnorm^2_{L^r_v L^\infty_{t,x}}.\nonumber
 \end{split}
\end{equation}
Next, we estimate $G_{j4},\,j=2,3,4$ by using (\ref{bbf}) as follows. As in (\ref{po}), set $k=1+(r-1)/n'$ for fixed $n>3$. From the same calculus as (\ref{po}), we have
\begin{equation}
\label{llk}
 \begin{split}
\int_{|v|\leq 2N} & dv \int_{|u| \leq N} du \int d\omega \,\left|f(s,y_1+vs,v') f (s,y_1+vs,u') \right|
\\ & \leq \left[\iiint_{|v|\leq 2N,\,|u| \leq N} \left|f(s,y_1+vs,v')\right|^{1 \over n}  \, du\, dv\,d\omega \right]^{1 \over r'} \\ & \hspace{15mm} \times \left[\int_{|v'| \leq 5N} \lnorm f(v')\rnorm^k_{L^\infty_{t,x}} dv' \int_{|u'| \leq 5N} \lnorm f (u') \rnorm^{r}_{L^\infty_{t,x}} du' \right]^{{1} \over r} 
\\ & \leq C_{r,N} \left[\iint_{|v|\leq 2N,\, |y| \leq 5N} \left|f(s,y_1+vs,y)\right| \,dy\,dv \right]^{1 \over nr'} \\ & \hspace{15mm} \times \left[ \iint_{|v|\leq 2N,\, |y| \leq 5N} {1 \over {|y-v|^{2n'}} } \,dy\,dv\right]^{{1 \over n'}}  \lnorm w^l f \rnorm^{1+{k \over r}}_{L^r_v L^\infty_{t,x}}
\\ & \leq C_{r,N,\kappa} \mathcal{E}_0 \lnorm w^l f \rnorm^{1+{k \over r}}_{L^r_v L^\infty_{t,x}}.
 \end{split}
\end{equation}
Moreover, we easily get  
\begin{equation}
\label{llk2}
 \begin{split}
\int_{|v|\leq 2N} & dv \int_{|u| \leq N} du\,\left|f(s,y_1+vs,v) f (s,y_1+vs,u) \right|\\ & \leq  \left[\iint_{|v|\leq 2N,\,|u| \leq N} \left|f(s,y_1+vs,u)\right| du\,dv \right]^{1 \over r'} \\ & \hspace{5mm} \times \left[\iint_{|v|\leq 2N,\,|u| \leq N} \left|f(s,y_1+vs,u)\right| \cdot \lnorm f (v) \rnorm^r_{L^\infty_{t,x}} du\,dv \right]^{1 \over r} \\ & \leq \left[\iint_{|v|\leq 2N,\,|u| \leq N} \left|f(s,y_1+vs,u)\right| du dv \right]^{1 \over r'} \\ & \hspace{25mm} \times \left[\int \lnorm f(u)\rnorm_{L^\infty_{t,x}} du \int \lnorm f (v) \rnorm^r_{L^\infty_{t,x}} dv \right]^{1 \over r}\\& \leq C_{r,N,\kappa} \mathcal{E}_0 \lnorm w^l f \rnorm^{1+{1 \over r}}_{L^r_v L^\infty_{t,x}}.
 \end{split}
\end{equation}
Hence
\begin{equation}
 \begin{split}
 G_{34}+G_{44} & \leq C_{r,N,\kappa} \mathcal{E}_0 \lnorm w^l f \rnorm^{1+{1 \over r}}_{L^r_v L^\infty_{t,x}} \int_0^{t-\kappa} ds \, e^{-\nu(v)(t-s)} \\ & \leq C_{r,N,\kappa} \mathcal{E}_0 \sum_{p_2 =r,r/k} \lnorm w^l f \rnorm^{1+{1 \over p_2}}_{L^r_v L^\infty_{t,x}}.\nonumber
 \end{split}
\end{equation}
For $G_{24}$, in view of (\ref{jk1}), we need to approximate $k_l (v,u)$ by $k_{l,N} (v,u)$ smooth with compact support such that
\begin{equation}
\label{hjk1}
 \begin{split}
\sup_{|u| \leq 3N} \int_{|v| \leq 3N} dv\,\left| k_l (v,u)-k_{l,N} (v,u) \right| \leq N^{-{4 \over r'}}.
 \end{split}
\end{equation}
We have
\begin{equation}
 \begin{split}
 \int_{|v| \leq 2N} & dv  \int_{|u| \leq N} du \left| k_{l,N} (v,u) \cdot \left(\nu^l f \right) (s,y+vs,u)\right| \\ & \leq C_{l,N} \int_{|v| \leq 2N} dv  \int_{|u| \leq N} du\, \left| f (s,y+vs,u)\right| \\ & \leq C_{l,N,\kappa} \mathcal{E}_0,\nonumber
 \end{split}
\end{equation}
\begin{equation}
 \begin{split}
\int_{|v| \leq 2N,\,|u| \leq N} & du\,dv\, \left| k_l (v,u)-k_{l,N} (v,u) \right| \cdot \left| \left(w^l f \right) (s,y+vs,u)\right| \\ & \leq {C_r N^{-{1 \over r'}}} \lnorm w^l f \rnorm_{L^r_v L^\infty_{t,x}}.\nonumber
 \end{split}
\end{equation}
Hence 
\begin{equation}
 \begin{split}
G_{24} \leq C_{l,N,\kappa} \mathcal{E}_0+{C_r N^{-{1 \over r'}}} \lnorm w^l f \rnorm_{L^r_v L^\infty_{t,x}}.\nonumber
 \end{split}
\end{equation}
We obtain the lemma by collecting above estimates and choosing $\kappa$ and $N^{-1}$ small.
\end{proof}

\begin{proof}[Proof of Lemma \ref{lem23}] From now on we use the notation
\begin{equation}
 \begin{split}
 y_1=x-v(t-s_1), \quad  y_2=y_1-u_1 (s_1-s_2),\nonumber
 \end{split}
\end{equation}
and recall that $k_l (v,u)=w^l (v) k(v,u)/w^l (u)$. By applying (\ref{integraleq}) to the second term on the right hand side of (\ref{integraleq}), we have
\begin{equation}
 \begin{split}
w^l (v) f (t,x,v)=\sum_{j=1}^5 H_j (t,x,v),\nonumber
 \end{split}
\end{equation}
where
\begin{equation}
 \begin{split}
H_1 (t,x,v)=e^{-\nu(v)t} w^l (v) f_{0} (x-vt,v),\nonumber
 \end{split}
\end{equation}
\begin{equation}
 \begin{split}
H_2 &(t,x,v)=\int du_1 \,k_l (v,u_1) \int_0^t ds_1\, e^{-\nu(v)(t-s_1)} \\ & \hspace{25mm} \times e^{-\nu(u_1)s_1} w^l (u_1) f_{0} (y_1-u_1s_1 , u_1),\nonumber
 \end{split}
\end{equation}
\begin{equation}
 \begin{split}
 H_3 &(t,x,v)=\iint du_1\,du_2\, k_l (v,u_1) k_l (u_1,u_2) \int_0^t ds_1\, e^{-\nu(v)(t-s_1)}\\ &\hspace{25mm}\times \int_0^{s_1} ds_2\, e^{-\nu(u_1)(s_1-s_2)} w^l (u_2) f(s_2,y_2,u_2),\nonumber
 \end{split}
\end{equation}
\begin{equation}
 \begin{split}
  H_4 &(t,x,v)=\int du_1\, k_l (v,u_1) \int_0^t ds_1\, e^{-\nu(v)(t-s_1)}\\ &\hspace{25mm}\times \int_0^{s_1} ds_2\, e^{-\nu(u_1)(s_1-s_2)} \,w^l (u_1)\Gamma [f,f](s_2,y_2,u_1),\nonumber
 \end{split}
\end{equation}
\begin{equation}
 \begin{split}
 H_5 (t,x,v)=\int_0^t ds_1 \,e^{-\nu(v)(t-s_1)} \,w^l (v) \Gamma [f,f] \left(s_1,x-v(t-s_1),v\right).\nonumber
 \end{split}
\end{equation}
Clearly $||H_1 ||_{L^r_v L^\infty_{t,x}}$ and $||H_2 ||_{L^r_v L^\infty_{t,x}}$ are bounded by $CM$. For $H_4$ and $H_5$ we can apply Lemma \ref{ineq1}, so their $L^r_v L^\infty_{t,x}$ norm are bounded by the last three terms of (\ref{xxc}). Thus it remains only to estimate $H_3$. We compute the $L^r_v L^\infty_{t,x}$ norm of $H_3$ by dividing it into four parts. First, by repeating H\"older's inequality, we can get
\begin{equation}
\label{wwf}
 \begin{split}
  \iint &du_1\,du_2\, k(v,u_1) k(u_1,u_2) w^l (v) f(s_2,y_2,u_2)\\& \leq C \left[\int du_1\,\left| k_l (v,u_1) \right| \right]^{1 \over r'}\\& \hspace{10mm} \times \left[ \iint du_1\,du_2\, \left|k_l (v,u_1) k_l (u_1,u_2) \right| \lnorm w^l f (u_2) \rnorm^r_{L^\infty_{t,x}} \right]^{1 \over r},
 \end{split}
\end{equation}
so the $L^r (\{|v| \geq L\})$ norm of $H_3$ is bounded by $C_r N^{-{{1} \over r'}} || w^l f ||_{L^r_v L^\infty_{t,x}}$. Since the first factor of (\ref{wwf}) is bounded, we also have
\begin{equation}
\label{sswq}
 \begin{split}
 \lnorm H_3 \rnorm_{L^r_v L^\infty_{t,x}} \leq C \left[\int dv \,\left| k_l (v,u_1) \right| \int du_1 \,\left|k_l (u_1,u_2) \right| \int du_2 \, \lnorm w^l f (u_2) \rnorm^r_{L^\infty_{t,x}}\right]^{1 \over r},
 \end{split}
\end{equation}
and hence if either $|v| \leq N,\,|u_1| \geq 2N$ or $|u_1| \leq 2N,\,|u_2| \geq 3N$ then $|| H_3 ||_{L^r_v L^\infty_{t,x}} \leq C_r N^{-{1 \over r}} || w^l f ||_{L^r_v L^\infty_{t,x}}.$ Lastly, we consider the $L^r (\{|v| \leq N\})$ norm of the remaining part of $H_3$ which is given by
\begin{equation}
\label{lkd}
 \begin{split}
 H_3 (t,x,v)&=\int_{|u_1| \leq 2N} du_1 \int_{|u_2| \leq 3N} du_2\, k_l (v,u_1) k_l (u_1,u_2) \int_\kappa^t ds_1\, e^{-\nu(v)(t-s_1)}\\ &\hspace{25mm}\times \int_{s_1 -\kappa}^{s_1} ds_2\, e^{-\nu(u_1)(s_1-s_2)} w^l (u_2) f(s_2,y_2,u_2)\\& \hspace{5mm}+\int_{|u_1| \leq 2N} du_1 \int_{|u_2| \leq 3N} du_2\, k_l (v,u_1) k_l (u_1,u_2) \int_0^\kappa ds_1\, e^{-\nu(v)(t-s_1)}\\ &\hspace{25mm}\times \int_0^{s_1} ds_2\, e^{-\nu(u_1)(s_1-s_2)} w^l (u_2) f(s_2,y_2,u_2)\\ & \hspace{5mm} +\int_{|u_1| \leq 2N} du_1 \int_{|u_2| \leq 3N} du_2\, k_l (v,u_1) k_l (u_1,u_2) \int_\kappa^t ds_1\, e^{-\nu(v)(t-s_1)}\\ &\hspace{25mm}\times \int_0^{s_1 -\kappa} ds_2\, e^{-\nu(u_1)(s_1-s_2)} w^l (u_2) f(s_2,y_2,u_2).
 \end{split}
\end{equation}
From (\ref{sswq}), clearly, the $L^r (\{|v| \leq N\}; L^\infty_{t,x})$ norms of the first two terms are bounded by $C_{r,N} \kappa || w^l f ||_{L^r_v L^\infty_{t,x}}$. For the last term, we use (\ref{bbf}). As before, we approximate $k_l$ by $k_{l,N}$ satisfying (\ref{hjk1}). Then
\begin{equation}
 \begin{split}
 k_l (v,u_1)  k_l (u_1,u_2)=&\left[k_{l} (v,u_1)-k_{l,N}(v,u_1)\right] k_l (u_1,u_2)\\&+ \left[k_l (u_1,u_2)-k_{l,N} (u_1,u_2) \right] k_{l,N} (v,u_1)\\&+k_{l,N} (v,u_1) k_{l,N} (u_1,u_2),\nonumber
 \end{split}
\end{equation}
and from this and (\ref{sswq}), the $L^r (\{|v| \leq N\} ; L^\infty_{t,x})$ norm of the last term of (\ref{lkd}) is bounded by
\begin{equation}
 \begin{split}
 C_r  N^{-{4 \over rr'}} & \lnorm w^l f \rnorm_{L^r_v L^\infty_{t,x}}\\+&C_{l,N} \lnorm \int_\kappa^t ds_1\, e^{-\nu(v)(t-s_1)} \int_0^{s_1 -\kappa} ds_2\, e^{-\nu(u_1)(s_1-s_2)} \right. \right. \\& \hspace{20mm} \times \left.\left.  \int_{|u_1| \leq 2N} du_1 \int_{|u_2| \leq 3N} du_2 \,\left|f(s_2,y_2,u_2)\right| \rnorm_{L^r_v L^\infty_{t,x}}\\& \hspace{-5mm}\leq {C_r N^{-{4 \over rr'}}} \lnorm w^l f \rnorm_{L^r_v L^\infty_{t,x}}+C_{l,N,\kappa} \mathcal{E}_0.\nonumber
 \end{split}
\end{equation}
Hence we can obtain the lemma from above estimates.
\end{proof}

{\it{E-mail address}}: nishimura.koya.42e@kyoto-u.jp  

\begin{thebibliography}{2} 
 \bibitem{r2g} R. Duan, F. Huang, Y. Wang, T. Yang, Global well-posedness of the Boltzmann equation with large Amplitude initial data, Arch. Rational Mech. Anal., 225 (2017), 375-424.
 \bibitem{w21} Robert T. Glassey, The Cauchy Problem in Kinetic Theory, Society for Industrial and Applied Mathematics (SIAM), Philadelphia, PA, 1996.
\bibitem{r124} Yan Guo, The Vlasov-Poisson-Boltzmann system near Maxwellians, Commun. Pure Appl. Math., Vol. LV (2002), 1104-1135. 
\bibitem{r112} Yan Guo, Decay and continuity of Boltzmann equation in bounded domains, Arch. Rational Mech. Anal., 197 (2010) 713-809.
 \bibitem{r1} Yan Guo, Bounded solutions for the Boltzmann equation, Quart. Appl. Math., LXVIII, (2010), 143-148
\bibitem{r232} Robert M. Strain and Keya Zhu, Large-time decay of the soft potential relativistic Boltzmann equation in $\mathbb{R}^3_x,$ Kinetic and Related Models, 5 (2012), 383-415.
 \bibitem{r222} S. Ukai, T. Yang, The Boltzmann equation in the space $L^2 \cap L^\infty_\beta$: Global and time-periodic solutions, Analysis and Applications 4 (2006), 263-310.
\end{thebibliography}
\end{document}